\newtheorem{theorem}{Theorem}[section]
\newtheorem{lemma}[theorem]{Lemma}
\newtheorem{proposition}[theorem]{Proposition}
\newtheorem{corollary}[theorem]{Corollary}
\theoremstyle{definition}
\newtheorem{definition}[theorem]{Definition}
\theoremstyle{remark}
\newtheorem{remark}[theorem]{Remark}
\newtheorem{conj}[theorem]{Conjecture}
\numberwithin{equation}{section}
\begin{document}

%
%
%
%
%
%
%
%
%

\title[New parity results for $21$-regular partitions]
 {Certain Diophantine equations and new parity results for $21$-regular partitions}

\author{Ajit Singh}
\address{Department of Mathematics, Indian Institute of Technology Guwahati, Assam, India, PIN- 781039}
\email{ajit18@iitg.ac.in}

\author{Gurinder Singh}
\address{Department of Mathematics, Indian Institute of Technology Guwahati, Assam, India, PIN- 781039}
\email{gurinder.singh@iitg.ac.in}

\author{Rupam Barman}
\address{Department of Mathematics, Indian Institute of Technology Guwahati, Assam, India, PIN- 781039}
\email{rupam@iitg.ac.in}

\date{January 27, 2023}


\subjclass{11D09, 11D45, 11P83}

\keywords{Diophantine equations; Dirichlet density; modular forms; regular partitions}

\dedicatory{}

\begin{abstract} For a positive integer $t\geq 2$, let $b_{t}(n)$ denote the number of $t$-regular partitions of a nonnegative integer $n$. In a recent paper, Keith and Zanello investigated the parity of $b_{t}(n)$ when $t\leq 28$. They discovered new infinite families of Ramanujan type congruences modulo 2 for $b_{21}(n)$ involving every prime $p$ with $p\equiv 13, 17, 19, 23 \pmod{24}$. In this paper,
we investigate the parity of $b_{21}(n)$ involving the primes $p$ with $p\equiv 1, 5, 7, 11 \pmod{24}$. We 
prove new infinite families of Ramanujan type congruences modulo 2 for $b_{21}(n)$ involving the odd primes $p$ for which the Diophantine equation $8x^2+27y^2=jp$ has primitive solutions for some $j\in\left\lbrace1,4,8\right\rbrace$, and we also prove that the Dirichlet density of such primes is equal to $1/6$. Recently, Yao provided new infinite families of congruences modulo $2$ for $b_{3}(n)$ and those congruences involve every prime $p\geq 5$ based on Newman's results. Following a similar approach, we prove new infinite families of congruences modulo $2$ for $b_{21}(n)$, and these congruences imply that $b_{21}(n)$ is odd infinitely often.
\end{abstract}

\maketitle
\section{Introduction and statement of results} 
 A partition of a positive integer $n$ is a finite sequence of non-increasing positive integers $(\lambda_1, \lambda_2, \ldots, \lambda_k)$ such that $\lambda_1+\lambda_2+\cdots +\lambda_k=n$. 
 Let $t\geq 2$ be a fixed positive integer. A $t$-regular partition of a positive integer $n$ is a partition of $n$ such that none of its part is divisible by $t$. For example, $(5, 2, 2, 1)$ is a $3$-regular partition of $10$. Let $b_{t}(n)$ be the number of $t$-regular partitions of $n$. The generating function for $b_{t}(n)$ is given by 
\begin{align}\label{gen_fun}
\sum_{n=0}^{\infty}b_{t}(n)q^n=\frac{f_{t}}{f_1},
\end{align}
where $f_k:=(q^k; q^k)_{\infty}=\prod_{j=1}^{\infty}(1-q^{jk})$ and $k$ is a positive integer. Also, $b_t(0)$ is defined to be $1$.
 \par In literature, many authors have studied the arithmetic properties of $b_{t}(n)$ for certain values of $t$, see for example \cite{Merca, baruahdas, calkin, Carlson, cui_Gu_2013, cui_Gu_2015, Dandurand, Furcy, gordon1997, hirschhorn_sellers, Keith2014, Keith2021, Lovejoy, Penniston_2002, Penniston_2008, Webb, Xia_2015, xia_yao,Yao2022, zhao_jin_yao}. In a recent paper \cite{Keith2021}, Keith and Zanello studied the parity of  $b_t(n)$ for certain values of $t\leq 28$, and made several conjectures on $b_t(n)$. 
In \cite{SB2}, the first and the third author have proved two conjectures of Keith and Zanello on $b_3(n)$ and $b_{25}(n)$ respectively. In \cite[Theorem 13]{Keith2021}, Keith and Zanello proved infinite families of Ramanujan type congruences satisfied by $b_{21}(n)$ which involves every prime $p$ satisfying $p\equiv 13,17,19,23 \pmod{24}$. To be specific, if $p\equiv 13,17,19,23 \pmod{24}$ is prime, then they proved that
\begin{align}\label{KeithThm}
b_{21}(4(p^2n+kp-11\cdot 24^{-1})+1)\equiv 0\pmod{2}
\end{align}
for all $1\leq k<p$, where $24^{-1}$ is taken modulo $p^2$.
\par The aim of this paper is to investigate the parity of $b_{21}(n)$. Our first main result proves infinite families of Ramanujan type congruences for the remaining classes of primes modulo $24$, namely, $p\equiv 1,5,7,11 \pmod{24}$. A key ingredient in our proof is to find the integral solutions of certain Diophantine equations, which has also been employed very recently by Ballantine, Merca, and Radu in \cite{Merca} to study the parity of $b_{3}(n)$. To state our main results, we first introduce some notation. Given the Diophantine equation $ax^2+by^2 = n$, by a primitive solution we mean a solution $(x, y)\in \mathbb{Z}^2$ satisfying $\gcd(x, y) = 1$. Let $\mathcal{Q}$ denote the set of odd primes $p$ such that the Diophantine equation $8x^2+27y^2=jp$ has primitive solutions for some $j\in \{1, 4, 8\}$.
\begin{theorem}\label{thm1}
For every $p\in\mathcal{Q}$ and $n\geq0$,  we have
\begin{align*}
b_{21}(4(p^2n+\beta p-11\cdot24^{-1}_p)+1)\equiv 0 \pmod{2}
\end{align*}
for all $\beta$ with $0\leq\beta<p$ and $\displaystyle\beta\neq\frac{11(p^2+24\cdot24^{-1}_p-1)}{24p}$, where $24^{-1}_p$ is the inverse of $24$ taken modulo $p$ such that $1\leq-24^{-1}_p\leq p-1$.
\end{theorem}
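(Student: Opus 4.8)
The plan is to reduce the parity of $b_{21}$ on the progression $n\equiv1\pmod 4$ to the Fourier coefficients of a single weight-one CM newform, and then read off the congruences from Hecke multiplicativity. Starting from \eqref{gen_fun}, I use $f_1^2\equiv f_2\pmod2$ to write $f_{21}/f_1\equiv f_1f_{21}/f_2\pmod 2$, expand $f_1$ and $f_{21}$ by Euler's pentagonal number theorem, and perform a $2$-dissection isolating the terms with $n\equiv1\pmod4$. The target of this stage is a congruence of the shape
\[
\sum_{m\ge0}b_{21}(4m+1)\,q^{24m+11}\equiv f(q)\pmod 2,
\]
where $f=\sum_{N\ge1}a(N)q^N$ is (the reduction of) the weight-one newform with complex multiplication by $K=\mathbb{Q}(\sqrt{-6})$, of nebentypus $\left(\tfrac{-6}{\cdot}\right)$, attached to a cubic ring-class character $\psi$ of conductor $6$ (the order of discriminant $-864$). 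That $K=\mathbb{Q}(\sqrt{-6})$ is the correct field is dictated by the hypotheses: $\mathcal Q$ is defined purely through $p\bmod 24$, and $\left(\tfrac{-6}{p}\right)=1$ exactly when $p\equiv1,5,7,11\pmod{24}$, which is the complementary range to the primes treated in \eqref{KeithThm}. (As a sanity check on the shape: $b_{21}(9)=30$ is even while $24\cdot2+11=59=8\cdot2^2+27\cdot1^2$, whereas $b_{21}(1)=b_{21}(5)=1$ are odd and $11,35$ are not so represented.)

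Next I record the arithmetic identity that drives the whole argument. Writing $t=(24\cdot24^{-1}_p-1)/p\in\mathbb Z$ and $m=p^2n+\beta p-11\cdot24^{-1}_p$, a direct substitution gives $24m+11=p\,M$ with $M=24pn+24\beta-11t$, so that $M\equiv24\beta-11t\pmod p$. Hence $p\mid M$ — equivalently $p^2\mid(24m+11)$ — holds precisely for the single excluded residue $\beta=\tfrac{11(p^2+24\cdot24^{-1}_p-1)}{24p}=\tfrac{11(p+t)}{24}$; for every admissible $\beta$ one therefore has $p\parallel(24m+11)$.

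Now the vanishing. Since $f$ is a Hecke eigenform, $a(pM)=a(p)a(M)$ whenever $p\nmid M$. The crucial number-theoretic input is the criterion $a(p)\equiv0\pmod2\iff p\in\mathcal Q$. For split $p$ one has $a(p)=\psi(\mathfrak p)+\psi(\overline{\mathfrak p})\in\{2,-1\}$, and the even case $a(p)=2$ occurs exactly when $\psi(\mathfrak p)=1$, i.e. when $\mathfrak p$ lies in the principal ring-class, which is the same as $p$ being represented by the principal form $8x^2+27y^2$ of discriminant $-864$ up to the factor $j\in\{1,4,8\}$ coming from the ramified prime $2$. Granting this, for $p\in\mathcal Q$ and any admissible $\beta$ we combine the two previous stages:
\[
b_{21}\bigl(4(p^2n+\beta p-11\cdot24^{-1}_p)+1\bigr)\equiv a(pM)=a(p)a(M)\equiv0\pmod2,
\]
since $p\nmid M$ and $a(p)$ is even. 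This is exactly the assertion of the theorem. The exclusion is genuine: for the omitted $\beta$ one has $p^2\mid(24m+11)$ and $a(p^2)=a(p)^2-\left(\tfrac{-6}{p}\right)$ is odd, so the coefficient need not be even there.

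The main obstacle is the first stage: turning the formal mod-$2$ manipulation into a clean identity with an honest Hecke eigenform requires a careful $2$-dissection and the correct normalization of level and nebentypus so that $f$ really is the claimed CM newform. The second delicate point is the Diophantine criterion $a(p)\equiv0\iff p\in\mathcal Q$: translating $\psi(\mathfrak p)=1$ into solvability of $8x^2+27y^2=jp$ rests on the genus and ring-class theory of the order of discriminant $-864$ in $\mathbb{Q}(\sqrt{-6})$ together with an explicit analysis at the prime $2$, which is precisely what forces the three values $j\in\{1,4,8\}$. Once these are in hand, the Hecke step and the congruence bookkeeping of stages two and three are routine, and the same class-field input yields the density $1/6$ (the kernel of a cubic character has density $1/3$ among the split primes, which themselves have density $1/2$) via Chebotarev.
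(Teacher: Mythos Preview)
Your strategy---reducing to Hecke multiplicativity of a weight-one CM newform---is genuinely different from the paper's proof, which stays entirely inside elementary binary-quadratic-form theory. The paper writes $b_{21}(4k+1)\equiv\tfrac14\bigl(M_1(24k{+}11)-M_2(24k{+}11)\bigr)\pmod2$, where $M_1,M_2$ count $\mathbb Z^2$-representations by $8x^2+3y^2$ and $8x^2+27y^2$, and then shows $M_1(pm)-M_2(pm)\equiv0\pmod8$ via an explicit two-to-one map $U_{p,m}\to A_m$ between primitive solution sets (their Lemma~\ref{U_p,m_A_m}) together with Dickson's genus theory for discriminant $-96$. No Hecke operators enter; everything is a direct count. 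Your route would be conceptually cleaner and would unify the theorem with \eqref{KeithThm} (inert primes give $a(p)=0$), at the price of invoking ring-class-field machinery.

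That said, the proposal as written has a real imprecision on top of the two gaps you already acknowledge. The $q$-series congruence
\[
\sum_{m\ge0}b_{21}(4m+1)\,q^{24m+11}\equiv f(q)\pmod2
\]
cannot hold with $f$ a \emph{newform}: the left side is supported on $N\equiv11\pmod{24}$, while a CM newform of nebentypus $(\tfrac{-6}{\cdot})$ has $a_f(p)=-1$ (odd) at every split prime $p\notin\mathcal Q$, e.g.\ $p=5,7$. What your argument actually needs, and what suffices, is the coefficient-wise statement $b_{21}(4m+1)\equiv a_f(24m{+}11)\pmod2$ on that progression---but proving \emph{that} is precisely the ``main obstacle'' you flag, and it amounts to the same representation-number analysis the paper does directly. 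Your sanity check also slips: $b_{21}(5)=p(5)=7$, not $1$, and $35=8\cdot1^2+27\cdot1^2$ \emph{is} represented; the odd parity you want at $N=35$ comes from multiplicativity at the primes $5,7\notin\mathcal Q$ (so $a_f(35)=(-1)(-1)=1$), not from non-representability.

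In short: the plan is sound, and once the eigenform identification and the ring-class criterion $a_f(p)\equiv0\iff p\in\mathcal Q$ are actually established, the Hecke step is routine. But those two steps carry all the content, and the proposal does not carry them out. The paper trades that machinery for a hands-on bijection and genus computation---longer to state, but entirely self-contained.
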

In section 2, we show that, if $p\in \mathcal{Q}$, then $p\equiv 1,5,7,11\pmod{24}$, and $\mathcal{Q}$ contains infinitely many primes from each of these classes of primes modulo $24$.
To study the parity of $b_{3}(n)$, Ballantine, Merca, and Radu \cite{Merca} considered the set $\mathcal{P}$ which contains the primes $p$ such that the Diophantine equation $x^2+24\cdot 9y^2=jp$ has primitive solutions for some $j\in \{1, 4, 8\}$. We also show that $\mathcal{Q}=\mathcal{P}$.
\par We have $\mathcal{Q}=\{29, 59, 79, 103, 223, 227, 241, \ldots \}$. Putting $p=29$ in Theorem \ref{thm1}, we have
\begin{align}\label{eqn-thm1-29}
b_{21}(4(29^2n+29\beta+66)+1)\equiv 0\pmod 2,
\end{align}
for all $0\leq\beta<29$, $\beta\neq11$.
In \cite[Theorem 1.4]{SB1}, the first and the third author proved the congruence \eqref{eqn-thm1-29} using a technique developed by Radu in \cite{radu1}. In section 2, we also prove Theorem \ref{thm1} when $p=59, 79$ using Radu's technique. We note that proving Theorem \ref{thm1} for larger values of $p$ using  Radu's technique is not computationally feasible.
\par 
In a recent paper \cite{Yao2022}, Yao provided new infinite families of congruences modulo $2$ for $b_{3}(n)$ and those congruences involve every prime $p\geq 5$ based on Newman's results. In our next theorem, we prove similar congruences for $b_{21}(n)$. These congruences involve every prime $p\geq 5$ except $p=11$. In addition, we show that if $p\geq5$ with $p\not=11$ is a prime, then there exists an integer $\mu( p)\in\{4,6\}$ such that for all $k\geq 0$, $b_{21}(\frac{11p^{\mu(p)k}-5}{6})$ is odd. More precisely, we have the following theorem.
\begin{theorem}\label{Newmann_b21}
	Let $p\geq5$ with $p\not=11$ be a prime.
	\begin{enumerate}
		\item[\emph{(i)}] If $b_{21}\left(\frac{11p^2-5}{6}\right)\equiv1\pmod{2}$, then for all $n,k\geq0$,
		\begin{align}\label{Yao_1}
		b_{21}\left( 4p^{4k+4}n+4p^{4k+3}\beta+\frac{11p^{4k+4}-5}{6}\right) \equiv0\pmod{2},
		\end{align}
		where $1\leq \beta<p$ and for $k\geq0$,
		\begin{align}\label{Yao_1a}
		b_{21}\left(\frac{11p^{4k}-5}{6}\right)\equiv 1\pmod{2}. 
		\end{align}
		\item[\emph{(ii)}] If $b_{21}\left(\frac{11p^2-5}{6}\right)\equiv0\pmod{2}$, then for all $n,k\geq0$ with $p\nmid(24n+11)$,
		\begin{align}\label{Yao_2}
		b_{21}\left( 4p^{6k+2}n+\frac{11p^{6k+2}-5}{6}\right) \equiv0\pmod{2}
		\end{align}
		and for $k\geq0$,
		\begin{align}\label{Yao_2a}
		b_{21}\left(\frac{11p^{6k}-5}{6}\right)\equiv 1\pmod{2}. 
		\end{align}
	\end{enumerate}	
\end{theorem}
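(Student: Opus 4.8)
The plan is to follow Yao's strategy for $b_3(n)$ and reduce the problem modulo $2$ to the Hecke action on a single eta-quotient. First I would record the base reduction
\[
\sum_{n\ge0}b_{21}(n)q^{6n+5}=\frac{\eta(126\tau)}{\eta(6\tau)}\equiv\frac{\eta(6\tau)\,\eta(126\tau)}{\eta(12\tau)}\pmod{2},
\]
using the elementary congruence $f_k^2\equiv f_{2k}\pmod{2}$ (equivalently $\eta(\tau)^2\equiv\eta(2\tau)$). Writing $F=\sum_{m}c(m)q^{m}$ for the reduction, so that $c(m)\equiv b_{21}\!\left(\tfrac{m-5}{6}\right)\pmod{2}$ when $m\equiv5\pmod{6}$ and $c(m)=0$ otherwise, the initial values $c(5)\equiv b_{21}(0)\equiv1$ and $c(11)\equiv b_{21}(1)\equiv1$ are immediate. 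The point of the reduction, already implicit in the CM/theta description of $b_{21}$ underlying Theorem \ref{thm1}, is that $F$ is congruent modulo $2$ to a Hecke eigenform, so Newman's results furnish a three-term $T_{p^2}$-type recursion for its coefficients.

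The key computation is the mod-$2$ shape of that recursion. For a prime $p\ge5$ with $p\neq11$ and eigenvalue $\omega_p$, the relation reads
\[
c(p^2m)\equiv\omega_p\,c(m)+c\!\left(m/p^2\right)\pmod{2}\qquad(p\mid m),
\]
since for $p\mid m$ the quadratic symbol in the middle term vanishes and the surviving power of the odd prime $p$ reduces to $1$; whereas for $p\nmid m$ the middle term survives and, every quadratic symbol and every power of $p$ reducing to $1$ modulo $2$, it collapses to
\[
c(p^2m)\equiv(\omega_p+1)\,c(m)\pmod{2}\qquad(p\nmid m).
\]
Taking $m=11$ in the second relation and using $c(11)\equiv1$ gives the crucial identification
\[
\omega_p\equiv c(11p^2)+1\equiv b_{21}\!\left(\tfrac{11p^2-5}{6}\right)+1\pmod{2},
\]
which is exactly the quantity whose parity distinguishes the two cases of the theorem.

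With $w_j:=c(11p^{2j})$ (so $w_0\equiv1$ and $w_1\equiv b_{21}(\tfrac{11p^2-5}{6})$) the recurrence $w_j\equiv\omega_p w_{j-1}+w_{j-2}\pmod{2}$ must be iterated, and the same three-term recurrence governs every chain $c(p^{2i}m)$. If $b_{21}(\tfrac{11p^2-5}{6})\equiv1$, then $\omega_p\equiv0$ and the recurrence degenerates to a period-$2$ shift: the odd-power chain $c(p^{2i+1}u)$ with $p\nmid u$ starts $c(pu),\,0,\,c(pu),\,0,\dots$, so it vanishes at odd positions, and since $6M+5=p^{4k+3}(24pn+11p+24\beta)$ has $p^{4k+3}$ exactly dividing it (as $1\le\beta<p$) with $4k+3$ at an odd position, this yields \eqref{Yao_1}; meanwhile $w_0\equiv w_1\equiv1$ forces $w_j\equiv1$ for all $j$, giving \eqref{Yao_1a}. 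If instead $b_{21}(\tfrac{11p^2-5}{6})\equiv0$, then $\omega_p\equiv1$, so the coprime relation gives $c(p^2m)\equiv0$ whenever $p\nmid m$; since $6M+5=p^{6k+2}(24n+11)$ with $p\nmid(24n+11)$, the Fibonacci-type recurrence $w_j\equiv w_{j-1}+w_{j-2}$ carries this vanishing to position $3k+1$ and yields \eqref{Yao_2}, while the same recurrence from $w_0\equiv1,\,w_1\equiv0$ has period $3$ with $w_{3k}\equiv1$, which is \eqref{Yao_2a}. The excluded primes $p=2,3$ (dividing the level) and $p=11$ (the base index, where the multiplicativity $c(11p^{2j})=c(11)c(p^{2j})$ fails) are precisely those for which the recursion is unavailable.

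The main obstacle is the first step: rigorously identifying $F$ with the mod-$2$ reduction of a Hecke eigenform and pinning down the exact form of the $T_{p^2}$-recursion modulo $2$, that is, justifying the applicability of Newman's theorems to this particular (weakly holomorphic, half-integral weight) eta-quotient and checking that the auxiliary Legendre and character factors are units reducing to $1$. Once the eigenform structure and the eigenvalue congruence $\omega_p\equiv b_{21}(\tfrac{11p^2-5}{6})+1$ are established, the remaining work is the elementary but careful bookkeeping that translates the coefficient positions $p^{4k+3}u$, $p^{6k+2}v$, and $11p^{\mu(p)k}$ back into the stated arguments of $b_{21}$.
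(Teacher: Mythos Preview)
Your overall strategy—extract the Hecke eigenvalue $\omega_p$ from the parity of $b_{21}\!\left(\frac{11p^2-5}{6}\right)$, then iterate a three–term recurrence in the two cases $\omega_p\equiv 0,1\pmod 2$—is exactly the paper's. The case analysis and the periodicity arguments you give are correct and match the paper's computations line for line.

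The genuine gap is in your choice of representation. You work with the full generating function
\[
\sum_{n\ge 0}b_{21}(n)\,q^{6n+5}=\frac{\eta(126\tau)}{\eta(6\tau)},
\]
which, after undoing $\tau\mapsto 6\tau$, is the eta quotient $\eta(21\tau)/\eta(\tau)$ of weight $0$ and composite level $21$. Newman's Theorem~3 (the result the paper cites) applies to products $\eta(\tau)^r\eta(p\tau)^s$ with $p$ \emph{prime}; neither the level nor the weight of your $F$ fits this hypothesis, so the three–term $T_{p^2}$ recursion you postulate is not available for $F$, and the ``CM/theta description underlying Theorem~\ref{thm1}'' you invoke concerns $b_{21}(4n+1)$, not $b_{21}(n)$ in general. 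This is precisely the obstacle you flag, and it does not evaporate.

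The paper circumvents it by first invoking the elementary identity
\[
\sum_{n\ge 0}b_{21}(4n+1)\,q^n\equiv \frac{f_3^{\,4}}{f_1}\pmod 2,
\]
so that one works with $c(n)$ defined by $\sum c(n)q^n=f_3^4/f_1$, i.e.\ with the eta product $\eta(3\tau)^4/\eta(\tau)$ of weight $3/2$ and prime level $3$, to which Newman's Theorem~3 applies verbatim. With this $c(n)$ and the relation $b_{21}(4n+1)\equiv c(n)\pmod 2$, the recursion you wrote becomes exactly the paper's, the eigenvalue identification becomes $\gamma(0)\equiv c\!\left(\tfrac{11(p^2-1)}{24}\right)+1\pmod 2$, and your case–by–case iteration then coincides with the paper's proof.
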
   
\section{Proof of Theorem \ref{thm1}}
We first recall Euler's Pentagonal Number Theorem \cite{Andrews},
\begin{align}\label{thm1.2New}
	f_1=\sum_{m\in\mathbb{Z}}(-1)^mq^{\frac{m}{2}(3m-1)}.
\end{align}
From Lemma 18 of \cite{Keith2021}, we have
\begin{align}\label{thm1.2}
	\frac{f^3_3}{f_1}\equiv\sum_{n\in\mathbb{Z}}q^{n(3n-2)}\pmod{2}.
\end{align}
We next recall the following identity from \cite{Keith2021}:
\begin{align}\label{thm1.1}
	\sum_{n=0}^{\infty}b_{21}(4n+1)q^n\equiv \frac{f_3^4}{f_1}\pmod 2.
\end{align}
Combining \eqref{thm1.2New}, \eqref{thm1.2}, and \eqref{thm1.1}, we have
\begin{align*}
	\sum_{n=0}^{\infty}b_{21}(4n+1)q^n&\equiv\left(\frac{f^3_3}{f_1}\right)f_3\\
	&\equiv\sum_{n\in\mathbb{Z}}q^{n(3n-2)} \sum_{m\in\mathbb{Z}}q^{\frac{3m}{2}(3m-1)}\pmod{2}.
\end{align*}
We define 
\begin{align*}
	\sum_{k=0}^{\infty}a(k)q^k:=\sum_{n\in\mathbb{Z}}q^{n(3n-2)} \sum_{m\in\mathbb{Z}}q^{\frac{3m}{2}(3m-1)}.
\end{align*}
Then, for all $k\geq0$, we have
\begin{align}\label{relation_b21_and_a}
	b_{21}(4k+1)\equiv a(k)\pmod{2},
\end{align}
where
\begin{align*}
	a(k)=\left|\left\lbrace(n,m)\in\mathbb{Z}^2:\ n(3n-2)+\frac{3m}{2}(3m-1)=k\right\rbrace\right|
\end{align*}
which we can further rewrite as
\begin{align*}
	a(k)=\left|\left\lbrace(x,y)\in\mathbb{N}^2:\ 8x^2+3y^2=24k+11,\ 3\nmid y\right\rbrace\right|.
\end{align*}
To prove Theorem \ref{thm1}, it is enough to prove that $a(p^2n+\beta p-11\cdot24^{-1}_p)\equiv 0 \pmod{2}$, for those $p$ and $\beta$ given in the statement of the theorem. To prove this claim, we require the following lemmas.
\begin{lemma}\label{lemma_1}
Let $p\geq 29$ be a prime and $0\leq\beta<p$, $\displaystyle\beta\neq\frac{11(p^2+24\cdot24^{-1}_p-1)}{24p}$, and $t=p^2n+\beta p-11\cdot24^{-1}_p$. Then $24t+11=pm$, for some positive integer $m$ such that $p\nmid m$.
\end{lemma}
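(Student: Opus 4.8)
The plan is to substitute the given expression for $t$ into $24t+11$, show that $p$ divides the result cleanly, and then analyze the resulting quotient $m$ to confirm it is a positive integer with $p\nmid m$ for every admissible $\beta$. The whole argument is elementary, so the real work is careful bookkeeping around the normalization of $24^{-1}_p$ and the identification of the single excluded residue.

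First I would unwind the normalization. Writing $r=-24^{-1}_p$, the hypothesis $1\le -24^{-1}_p\le p-1$ gives $1\le r\le p-1$ and $24r\equiv -1\pmod p$; hence $24r+1$ is a positive multiple of $p$, say $24r+1=sp$, and the bounds $25\le 24r+1\le 24p-23$ force $1\le s\le 23$. Substituting $24^{-1}_p=-r$ (so that $t=p^2n+\beta p+11r$) and using $24\cdot 24^{-1}_p=-24r=1-sp$, I compute
\begin{align*}
24t+11 &= 24p^2n+24\beta p-24\cdot 11\cdot 24^{-1}_p+11\\
&= 24p^2n+24\beta p-11(1-sp)+11\\
&= p\bigl(24pn+24\beta+11s\bigr).
\end{align*}
Thus $24t+11=pm$ with $m=24pn+24\beta+11s$. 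Since $n\ge 0$, $\beta\ge 0$, and $s\ge 1$, we have $m\ge 11>0$, and $m$ is plainly an integer; this settles both the factorization and the positivity.

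It remains to show $p\nmid m$. Reducing modulo $p$ gives $m\equiv 24\beta+11s\pmod p$, so $p\mid m$ precisely when $24\beta\equiv -11s\pmod p$, i.e.\ when $\beta$ lies in one fixed residue class mod $p$. I would then verify that this forbidden class is exactly the excluded value in the statement. Using $24\cdot 24^{-1}_p=1-sp$,
\begin{align*}
\frac{11\bigl(p^2+24\cdot 24^{-1}_p-1\bigr)}{24p}=\frac{11(p^2-sp)}{24p}=\frac{11(p-s)}{24}=:\beta_0,
\end{align*}
and indeed $24\beta_0=11(p-s)\equiv -11s\pmod p$, so $\beta_0$ is precisely the residue for which $p\mid m$. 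Hence any admissible $\beta\ne\beta_0$ gives $p\nmid m$, as required.

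The one genuinely delicate point, and the step I would treat most carefully, is confirming that $\beta_0=\tfrac{11(p-s)}{24}$ is a bona fide integer lying in $[0,p)$, so that excluding the single value $\beta=\beta_0$ really removes the entire forbidden class within the range $0\le\beta<p$. Integrality follows because $p$ is coprime to $24$, whence $p^2\equiv 1\pmod{24}$; combined with $sp\equiv 1\pmod{24}$ (from $24r+1=sp$) this forces $s\equiv p\pmod{24}$, so $24\mid(p-s)$. For the range, $1\le s\le 23$ and $p\ge 29$ give $0<\tfrac{11(p-s)}{24}<\tfrac{11p}{24}<p$. Everything else is routine substitution, so once this integrality-and-range check is in place the lemma follows.
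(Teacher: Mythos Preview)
Your proof is correct and follows essentially the same approach as the paper: substitute the expression for $t$ into $24t+11$, factor out $p$, and then identify the unique $\beta$ for which $p^2\mid 24t+11$. The paper handles the last step by contradiction (assuming $p\mid m$, bounding an auxiliary integer $u$ with $u\equiv 11\pmod{24}$ to force $u=11$, and recovering $\beta=\beta_0$), whereas you compute the forbidden residue $\beta_0=\tfrac{11(p-s)}{24}$ directly and additionally verify it is an integer in $[0,p)$; your version is a bit more explicit, but the two arguments are equivalent.
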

\begin{proof}
Since $24^{-1}_p$ is the inverse of 24 modulo $p$, there exists an integer $k$ such that $24\cdot24^{-1}_p=kp+1$. Then $24t+11=24p^2n+24\beta p-11kp$. Clearly, $p\mid24t+11$, and therefore, there exists a positive integer $m$ (since $t$ is positive) such that $24t+11=pm$.
\par Suppose that $p\mid m$. Then $p^2\mid24(p^2n+\beta p-11\cdot24^{-1}_p)+11$. This in turn implies that $p^2\mid24(\beta p-11\cdot24^{-1}_p)+11$. Then, there exists $u\in\mathbb{Z}$ such that
\begin{align}{\label{up^2}}
24(\beta p-11\cdot24^{-1}_p)+11=up^2.	
\end{align}
From the inequalities $0\leq\beta<p$ and $1\leq-24^{-1}_p\leq p-1$, we get
\begin{align*}
	\frac{25\cdot 11}{p^2}\leq \frac{24(\beta p-11\cdot24^{-1}_p)+11}{p^2}<24+\frac{11}{p^2}(24p-23).	
\end{align*}
Since $24p-23<p^2$ for every $p\geq29$, we have $0\leq u<24+11$. Also, \eqref{up^2} and the fact $p^2\equiv1\pmod{24}$ imply that $u\equiv11\pmod{24}$. Therefore, $u=11$. Thus, $24(\beta p-11\cdot24^{-1}_p)+11=11p^2$ and then $\displaystyle\beta=\frac{11(p^2+24\cdot24^{-1}_p-1)}{24p}$, which is a contradiction.
\end{proof}
Now, for $j\in\left\lbrace1,4,8\right\rbrace$, we observe the behavior modulo 24 of a positive integer $m$, coprime to 6, when the equation
\begin{equation}{\label{quad_form_8_27_jm}}
8x^2+27y^2=jm
\end{equation}
has primitive solutions.
\begin{lemma}\label{lemma_j=1}
If, for $j=1$ and a positive integer $m$ coprime to $6$, \eqref{quad_form_8_27_jm} has primitive solutions, then $m\equiv11\pmod{24}$.
\end{lemma}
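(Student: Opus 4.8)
The plan is to determine $m$ modulo $8$ and modulo $3$ separately and then recombine via the Chinese Remainder Theorem. Since $m$ is coprime to $6$, it is odd and not divisible by $3$, and I expect these two facts alone to pin down the residues of $x$ and $y$ tightly enough to force $m\equiv 11\pmod{24}$.

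First I would reduce \eqref{quad_form_8_27_jm} with $j=1$ modulo $8$. The term $8x^2$ vanishes, so $m\equiv 27y^2\equiv 3y^2\pmod 8$. Because $m$ is odd, $y$ must be odd (if $y$ were even the right-hand side would be even, contradicting $\gcd(m,6)=1$), and every odd square satisfies $y^2\equiv 1\pmod 8$. Hence $m\equiv 3\pmod 8$.

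Next I would reduce modulo $3$. Now $27y^2$ vanishes, leaving $m\equiv 8x^2\equiv 2x^2\pmod 3$. Since $3\nmid m$, we cannot have $3\mid x$, so $x^2\equiv 1\pmod 3$ and therefore $m\equiv 2\pmod 3$.

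Finally I would combine $m\equiv 3\pmod 8$ with $m\equiv 2\pmod 3$: the residues modulo $24$ that are $\equiv 3\pmod 8$ are $3,11,19$, and of these only $11$ is $\equiv 2\pmod 3$, so $m\equiv 11\pmod{24}$. I do not anticipate any genuine obstacle, as this is a short congruence computation; the only point worth flagging is that the conclusion uses only $\gcd(m,6)=1$, so primitivity of the solution plays no role in this case. I would expect primitivity to become essential instead in the $j=4$ and $j=8$ analogues, where one must control the exact power of $2$ dividing both sides before reducing modulo $8$.
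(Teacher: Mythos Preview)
Your proof is correct and follows essentially the same approach as the paper: from $\gcd(m,6)=1$ one deduces $2\nmid y$ and $3\nmid x$, and then a short congruence computation yields $m\equiv 11\pmod{24}$. The only cosmetic difference is that the paper works directly modulo $24$ (listing the possible residues of $x_0^2$ and $y_0^2$ modulo $24$) rather than splitting into moduli $8$ and $3$ and recombining via CRT; your observation that primitivity is not needed here is also accurate.
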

\begin{proof}
Let $m$ be a positive integer with $\gcd(m,6)=1$ and let $(x_0,y_0)$ be a primitive solution of \eqref{quad_form_8_27_jm} with $j=1$. We have $8x^2_0+27y^2_0=m$. Then, $2\nmid y_0$ and $3\nmid x_0$. Thus, $y^2_0\equiv1,9\pmod{24}$ and $x^2_0\equiv1,4,16\pmod{24}$. This yields that $m\equiv8x^2_0+3y^2_0\equiv11\pmod{24}$.
\end{proof}
\begin{lemma}\label{lemma_j=4}
If, for $j=4$ and a positive integer $m$ coprime to $6$, \eqref{quad_form_8_27_jm} has primitive solutions, then $m\equiv5\pmod{24}$.
\end{lemma}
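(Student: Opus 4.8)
The plan is to follow the same template as the proof of Lemma~\ref{lemma_j=1}: take a primitive solution, pin down the $2$-adic and $3$-adic behaviour of its coordinates, and then reduce everything modulo $24$. So let $(x_0,y_0)$ be a primitive solution of \eqref{quad_form_8_27_jm} with $j=4$, that is, $8x_0^2+27y_0^2=4m$ where $\gcd(m,6)=1$.

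First I would fix the parities. Reducing modulo $4$ and using $8x_0^2\equiv0$ and $4m\equiv0\pmod4$, we get $27y_0^2\equiv0\pmod4$, so $y_0$ is even; primitivity then forces $x_0$ to be odd, since otherwise $2\mid\gcd(x_0,y_0)$. Writing $y_0=2z_0$ and dividing the equation by $4$ gives $2x_0^2+27z_0^2=m$. As $m$ is odd and $2x_0^2$ is even, this forces $z_0$ to be odd (equivalently $y_0\equiv2\pmod4$). For the $3$-adic part, I would reduce $2x_0^2+27z_0^2=m$ modulo $3$: since $27z_0^2\equiv0\pmod3$ and $\gcd(m,3)=1$, we must have $3\nmid x_0$, so that $\gcd(x_0,6)=1$.

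With these constraints in hand, the last step is a reduction modulo $24$. For $x_0$ coprime to $6$ one has $x_0^2\equiv1\pmod{24}$, hence $2x_0^2\equiv2\pmod{24}$; for $z_0$ odd one has $z_0^2\equiv1,9\pmod{24}$, hence $27z_0^2\equiv3z_0^2\equiv3\pmod{24}$. Adding these, $m=2x_0^2+27z_0^2\equiv2+3\equiv5\pmod{24}$, which is the claim.

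The arithmetic here is routine residue bookkeeping, and the only place that needs genuine care is the $2$-adic analysis: it is not enough to know that $y_0$ is even, one must extract that $y_0\equiv2\pmod4$ (equivalently that $z_0=y_0/2$ is odd). This extra factor of $2$ is exactly what shifts the residue from the $11$ of Lemma~\ref{lemma_j=1} to $5$ here, and mishandling it would leave the class of $m$ modulo $24$ undetermined.
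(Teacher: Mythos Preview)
Your proof is correct and follows essentially the same route as the paper's: deduce $2\mid y_0$, set $y_0=2w$ to obtain $2x_0^2+27w^2=m$, use oddness of $m$ and primitivity to get $w$ odd and $x_0$ coprime to $6$, and then reduce modulo $24$. The only cosmetic difference is that the paper first records $x_0^2\equiv 1,4,16\pmod{24}$ from $3\nmid x_0$ and then invokes primitivity to cut this down to $x_0^2\equiv 1$, whereas you establish $x_0$ odd up front.
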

\begin{proof}
Let $m$ be a positive integer with $\gcd(m,6)=1$ and let $(x_0,y_0)$ be a primitive solution of \eqref{quad_form_8_27_jm} with $j=4$. We have $8x^2_0+27y^2_0=4m$. Clearly, $2\mid y_0$ and therefore, we write $y_0=2w$ for some $w\in\mathbb{Z}$. This gives $2x^2_0+27w^2=m$. Therefore, $2\nmid w$ and $3\nmid x_0$. Then, $w^2\equiv1,9\pmod{24}$ and $x^2_0\equiv1,4,16\pmod{24}$. Since $\gcd(x_0,y_0)=1$ and $y_0$ is even, $x^2_0\equiv1\pmod{24}$. Therefore, $m\equiv2+3w^2\equiv5\pmod{24}$.
\end{proof}
\begin{lemma}\label{lemma_j=8}
If, for $j=8$ and a positive integer $m$ coprime to $6$, \eqref{quad_form_8_27_jm} has primitive solutions, then $m\equiv1,7\pmod{24}$. Let, for a positive integer $n$, $val_p(n)$ denote the exponent of the highest power of the prime $p$ that divides $n$. For $j=8$, if $(x_0,y_0)$ is a primitive solution of \eqref{quad_form_8_27_jm}, then
\begin{enumerate}
\item[\emph{(a)}] if $m\equiv1\pmod{24}$, then $val_2(y_0)\geq3$;
\item[\emph{(b)}] if $m\equiv7\pmod{24}$, then $val_2(y_0)=2$.
\end{enumerate}
\end{lemma}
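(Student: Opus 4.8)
The plan is to proceed exactly as in the proofs of Lemmas \ref{lemma_j=1} and \ref{lemma_j=4}, by first extracting $2$-adic information from the equation $8x_0^2+27y_0^2=8m$ and then reducing modulo $24$. First I would observe that $8\mid 27y_0^2$, and since $\gcd(27,8)=1$ this forces $8\mid y_0^2$; because $val_2(y_0^2)=2\,val_2(y_0)$ must be at least $3$, we obtain $val_2(y_0)\geq 2$, i.e.\ $4\mid y_0$. Writing $y_0=4z$ and substituting back, the equation collapses to the much simpler identity $m=x_0^2+54z^2$.

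Next I would pin down $x_0$. Since $y_0$ is even and $\gcd(x_0,y_0)=1$, the integer $x_0$ must be odd, and since $\gcd(m,6)=1$ the relation $m\equiv x_0^2\pmod 3$ forces $3\nmid x_0$. Hence $\gcd(x_0,6)=1$, so $x_0^2\equiv 1\pmod{24}$. Using $54\equiv 6\pmod{24}$, the identity $m=x_0^2+54z^2$ reduces to
\[
m\equiv 1+6z^2\pmod{24}.
\]

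The whole statement then follows by splitting on the parity of $z$, which is precisely what controls $val_2(y_0)$. If $z$ is even then $6z^2\equiv 0\pmod{24}$, so $m\equiv 1\pmod{24}$; and $z$ even is exactly the condition $val_2(y_0)=val_2(4z)\geq 3$. If $z$ is odd then $z^2\equiv 1\pmod 4$, so $6z^2\equiv 6\pmod{24}$ and $m\equiv 7\pmod{24}$; and $z$ odd is exactly the condition $val_2(y_0)=2$. This simultaneously yields $m\equiv 1,7\pmod{24}$ together with both valuation claims (a) and (b), since the correspondence between the residue of $m$ and the parity of $z$ is a genuine equivalence rather than just an implication.

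I do not anticipate a serious obstacle; the argument is a short case analysis once the substitution $y_0=4z$ has been made. The only point demanding care is the very first step, namely correctly translating the divisibility $8\mid y_0^2$ into the valuation bound $val_2(y_0)\geq 2$ (rather than the naive $8\mid y_0$), and then tracking how the parity of $z$ transfers back to $val_2(y_0)$ so that the two subcases align exactly with the residues $1$ and $7$ modulo $24$.
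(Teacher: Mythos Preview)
Your proposal is correct and follows essentially the same approach as the paper's proof: write $y_0=4w$ from the $2$-adic constraint, reduce to $m=x_0^2+54w^2$, note $\gcd(x_0,6)=1$ so $x_0^2\equiv1\pmod{24}$, and then split on the parity of $w$ to obtain both the residue of $m$ and the valuation of $y_0$. Your treatment of $6z^2\pmod{24}$ via $z^2\pmod 4$ is a slightly cleaner phrasing than the paper's case-by-case check, but the argument is otherwise identical.
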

\begin{proof}
Let $m$ be a positive integer with $\gcd(m,6)=1$ and let $(x_0,y_0)$ be a primitive solution of \eqref{quad_form_8_27_jm} with $j=8$. We have $8x^2_0+27y^2_0=8m$. Clearly, $4\mid y_0$ and therefore, we write $y_0=4w$ for some $w\in\mathbb{Z}$. This gives $x^2_0+54w^2=m$. Then, $2,3\nmid x_0$ and therefore, $x^2_0\equiv1\pmod{24}$. Thus, $m\equiv1+6w^2\pmod{24}$.
\par Now, if $2\mid w$ then $m\equiv1\pmod{24}$. If $2\nmid w$ then $w^2\equiv1,9\pmod{24}$ and thus, $m\equiv7\pmod{24}$. Also, if $m\equiv1\pmod{24}$ then $y_0=4w$ and $2\mid w$, thus $val_2(y_0)\geq3$. On the other hand, if $m\equiv7\pmod{24}$ then $y_0=4w$ and $2\nmid w$, thus $val_2(y_0)=2$.
\end{proof}
The above three lemmas imply that if $p\in\mathcal{Q}$, then
\begin{align*}
&j=1\Longrightarrow p\equiv 11\pmod{24};\\
&j=4\Longrightarrow p\equiv 5\pmod{24};\\
&j=8\Longrightarrow p\equiv 1,7\pmod{24}.
\end{align*}
\begin{lemma}{\label{U_p,m_A_m}}
Let $p\in\mathcal{Q}$. Let $m$ be a positive integer such that $p\nmid m$ and $pm\equiv11\pmod{24}$. Let $j\in\left\lbrace1,4,8\right\rbrace$ be such that $8x^2+27y^2=jp$ has primitive solutions. If
\begin{align*}
U_{p,m}&:=\left\lbrace(u,v)\in\mathbb{Z}^2:8u^2+27v^2=pm,~\gcd(u,v)=1\right\rbrace;\\
A_m&:=\left\lbrace(a,b)\in\mathbb{Z}^2:a^2+216b^2=jm,~\gcd(a,b)=1\right\rbrace,
\end{align*}
then $|U_{p,m}|=2|A_m|$.
\end{lemma}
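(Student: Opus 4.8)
The key structural fact is that the binary quadratic forms $8X^2+27Y^2$ and $A^2+216B^2$ share the discriminant $-864$, and they are tied together by the Brahmagupta-type composition identities
\begin{align*}
(8u^2+27v^2)(8x^2+27y^2)&=(8ux+27vy)^2+216(uy-vx)^2\\
&=(8ux-27vy)^2+216(uy+vx)^2,
\end{align*}
together with the dual identity
\begin{align*}
(a^2+216b^2)(8x^2+27y^2)=8(ax+27by)^2+27(ay-8bx)^2,
\end{align*}
all of which are verified by direct expansion. My plan is to fix one primitive solution $(x_0,y_0)$ of $8x^2+27y^2=jp$ and use these identities to build an explicit $2$-to-$1$ surjection $\Phi\colon U_{p,m}\to A_m$, from which $|U_{p,m}|=2|A_m|$ is immediate.

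First I would construct $\Phi$. Given $(u,v)\in U_{p,m}$, reducing $8u^2+27v^2=pm$ and $8x_0^2+27y_0^2=jp$ modulo $p$ yields $8u^2\equiv-27v^2$ and $8x_0^2\equiv-27y_0^2\pmod p$, whence $216u^2y_0^2\equiv216v^2x_0^2\pmod p$ and therefore $p\mid(uy_0-vx_0)(uy_0+vx_0)$. Using $\gcd(u,v)=\gcd(x_0,y_0)=1$ and $p\nmid 6m$, I would check that $p$ divides \emph{exactly one} of $uy_0-vx_0$ and $uy_0+vx_0$; substituting into the matching composition identity then forces $p$ to divide the corresponding first factor as well. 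In the first case I set
\begin{align*}
\Phi(u,v)=(a,b):=\left(\frac{8ux_0+27vy_0}{p},\ \frac{uy_0-vx_0}{p}\right),
\end{align*}
with the sign-variant in the second case; in either case $a^2+216b^2=jm$, so $\Phi$ maps into the solution set of the $A_m$-equation.

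To see that $\Phi$ is $2$-to-$1$ and surjective, I would invert the two defining linear systems by Cramer's rule. Each has determinant $\pm jp$, and one finds that every preimage of a given $(a,b)$ is either $(u_0,v_0)$ or $(u_0,-v_0)$, where
\begin{align*}
u_0=\frac{ax_0+27by_0}{j},\qquad v_0=\frac{ay_0-8bx_0}{j};
\end{align*}
the dual identity then gives $8u_0^2+27v_0^2=pm$ automatically, and $v_0=0$ is impossible for $p\geq 29$, so the two preimages are genuinely distinct. The coprimality conditions are cheap in both directions: $p\nmid m$ together with $\gcd(a,b)=1$ forces $\gcd(u_0,v_0)=1$, while $\gcd(u,v)=1$ rules out every odd common divisor of $a$ and $b$.

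The remaining point — and the main obstacle — is the $2$-adic bookkeeping, which is exactly what the preceding lemmas are for. I must show that $j\mid(ax_0+27by_0)$ and $j\mid(ay_0-8bx_0)$, so that $(u_0,v_0)$ is integral (this is surjectivity), and that $\gcd(a,b)$ is odd, so that $\Phi$ really lands in $A_m$ (this is primitivity). Both reduce to a case analysis on $j\in\{1,4,8\}$. The case $j=1$ is immediate. For $j=4$ and $j=8$ I would feed in the parity information extracted in Lemmas \ref{lemma_j=4} and \ref{lemma_j=8} — the exact value of $\mathrm{val}_2(y_0)$ and the residue of $m$ modulo $24$, combined with the corresponding factorization of $a$ and the parity of $b$ forced by $a^2+216b^2=jm$ — to verify each divisibility and each ``$\gcd(a,b)$ is odd'' claim. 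Once these are settled, $\Phi$ is a $2$-to-$1$ surjection and the identity $|U_{p,m}|=2|A_m|$ follows.
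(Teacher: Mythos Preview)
Your proposal is correct and follows essentially the same route as the paper: both fix a primitive representation $(x_0,y_0)$ of $jp$ by $8X^2+27Y^2$, use the Brahmagupta-type composition identities to define an explicit $2$-to-$1$ map $U_{p,m}\to A_m$, and then dispatch the integrality/primitivity checks by the $2$-adic case analysis on $j\in\{1,4,8\}$ supplied by Lemmas~\ref{lemma_j=4} and~\ref{lemma_j=8} (together with the analogous facts for $a^2+216b^2=jm$, which the paper imports from \cite[Lemma~2.6]{Merca}). The only cosmetic differences are your labeling of the two cases and your observation that the two preimages of $(a,b)$ are $(u_0,\pm v_0)$, which is equivalent to the paper's pair $(u,v),(\tilde u,\tilde v)$; note also that $v_0\neq 0$ follows simply from $pm$ being odd, not from $p\geq 29$.
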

\begin{proof}
Let $p\in\mathcal{Q}$ and $m$ be a positive integer coprime to $p$ such that $pm\equiv11\pmod{24}$. Let $j\in\left\lbrace1,4,8\right\rbrace$ be such that $8x^2+27y^2=jp$ has primitive solutions and let $x_1,~y_1\in\mathbb{Z}$ with $\gcd(x_1,y_1)=1$ satisfying $8x^2_1+27y^2_1=jp$. Define a map $f:U_{p,m}\rightarrow A_m$ by $f((u,v))=(a,b)$, where for $(u,v)\in U_{p,m}$, we define $(a,b)$ as follows. 
\begin{enumerate}[(a)]
\item If $8x_1u-27y_1v\equiv0\pmod{p}$ then $x_1v+y_1u\equiv0\pmod{p}$ and define
\begin{align}\label{definition_a_b_1}
a:=\frac{8x_1u-27y_1v}{p},\ \ \ \  b:=\frac{x_1v+y_1u}{p}.
\end{align}
\item If $8x_1u+27y_1v\equiv0\pmod{p}$ then $x_1v-y_1u\equiv0\pmod{p}$ and define
\begin{align}\label{definition_a_b_2}
a:=\frac{8x_1u+27y_1v}{p},\ \ \ \  b:=\frac{x_1v-y_1u}{p}.
\end{align}
\end{enumerate}
In both the cases, we prove that $(a,b)\in A_m$ in the following three steps.\\
\textbf{Step 1:} $(a,b)\in\mathbb{Z}^2$. It is clear from the definitions of $a$ and $b$.\\ 
\textbf{Step 2:} $a^2+216b^2=jm$. It is easy to check that in both the cases
\begin{align*}
a^2+216b^2=\frac{(8x^2_1+27y^2_1)(8u^2+27v^2)}{p^2}=jm.
\end{align*}\\
\textbf{Step 3:} $\gcd(a,b)=1$. First, we write $u$ and $v$ in terms of $a$ and $b$. From \eqref{definition_a_b_1}, we get
\begin{align}\label{u_vcase1}
u=\frac{x_1a+27y_1b}{j},\ \ \ \  v=\frac{8x_1b-y_1a}{j},
\end{align}
and from \eqref{definition_a_b_2}, we get
\begin{align}\label{u_vcase2}
u=\frac{x_1a-27y_1b}{j},\ \ \ \  v=\frac{8x_1b+y_1a}{j}.
\end{align}
Now, if $j=1$ then from \eqref{u_vcase1} and \eqref{u_vcase2}, we have that $\gcd(a,b)\mid\gcd(u,v)$. Since $\gcd(u,v)=1$, it follows that $\gcd(a,b)=1$. If $j=4,8$, then $y_1$ is even and therefore, $x_1$ is odd, since $\gcd(x_1,y_1)=1$. Also, $v$ is odd as $pm$ is odd. Thus, in both the cases, $a$ is even and $b$ is odd. Then, $\gcd(a,b)$ is an odd number and from \eqref{u_vcase1} and \eqref{u_vcase2}, it follows that $\gcd(a,b)\mid\gcd(u,v)$. Since $\gcd(u,v)=1$, we have $\gcd(a,b)=1$.  
\par Next, we show that $f$ is surjective and two-to-one. Let $(a,b)\in A_m$. Define $u,~v$ as defined in \eqref{u_vcase1} and $\tilde{u},~\tilde{v}$ as defined in \eqref{u_vcase2}. We prove that $(u,v), (\tilde{u},\tilde{v})\in U_{p,m}$ in the following three steps.\\
\textbf{Step 1:} $(u,v),(\tilde{u},\tilde{v})\in\mathbb{Z}^2$. It is clear in the case when $j=1$. If $j=4$,  we have $a^2+216b^2=4m$ and $8x^2_1+27y^2_1=4p$. Clearly, $2\mid a,y_1$ and therefore, $b,~x_1$ are odd. Also, $4\nmid a,y_1$. Thus, $u,\tilde{u}\in\mathbb{Z}$. Since $4\mid y_1a$, we have $v,\tilde{v}\in\mathbb{Z}$.
\par If $j=8$, then $8x^2_1+27y^2_1=8p$ and from Lemma \ref{lemma_j=8} it follows that $p\equiv1,7\pmod{24}$.\\
\underline{Case 1:} $p\equiv1\pmod{24}$. By Lemma \ref{lemma_j=8}, we have $val_2(y_1)\geq3$. As $pm\equiv11\pmod{24}$, we have $m\equiv11\pmod{24}$. Since $a^2+216b^2=8m$, from Lemma 2.6 of \cite{Merca}, it follows that $val_2(a)\geq3$. Therefore, $u,\tilde{u}\in\mathbb{Z}$. Since $8\mid y_1a$, we have $v,\tilde{v}\in\mathbb{Z}$.\\
\underline{Case 2:} $p\equiv7\pmod{24}$. By Lemma \ref{lemma_j=8}, we have $val_2(y_1)=2$. As $pm\equiv11\pmod{24}$, we have $m\equiv5\pmod{24}$. Since $a^2+216b^2=8m$, from Lemma 2.6 of \cite{Merca}, it follows that $val_2(a)=2$ and therefore, $b, x_1$ are odd.  Thus, $u,\tilde{u}\in\mathbb{Z}$. Since $8\mid y_1a$, we have $v,\tilde{v}\in\mathbb{Z}$.\\
\textbf{Step 2:} $8u^2+27v^2=pm$ and $8\tilde{u}^2+27\tilde{v}^2=pm$. It is easy to check that
\begin{align*}
8u^2+27v^2=\frac{(8x^2_1+27y^2_1)(a^2+216b^2)}{j^2}=pm.
\end{align*} 
Similarly, $8\tilde{u}^2+27\tilde{v}^2=pm$.\\
\textbf{Step 3:} $\gcd(u,v)=\gcd(\tilde{u},\tilde{v})=1$. If we express $a$ and $b$ in terms of $u$ and $v$, we get \eqref{definition_a_b_1}. We have $8u^2+27v^2=pm$. Clearly, $p\nmid u$ and $p\nmid v$. Therefore, $p\nmid\gcd(u,v)$ and from \eqref{definition_a_b_1} it follows that $\gcd(u,v)\mid\gcd(a,b)$. Since $\gcd(a,b)=1$, we have $\gcd(u,v)=1$. Similarly, $\gcd(\tilde{u},\tilde{v})=1$.
\par From \eqref{definition_a_b_1} and \eqref{definition_a_b_2}, we get
\begin{align}\label{a_1}
8x_1u-27y_1v&\equiv0\pmod{p};\\
8x_1u+27y_1v&\equiv0\pmod{p}.\label{a_2}
\end{align}
This shows that $f$ is surjective and $f^{-1}\{ (a,b)\} =\left\lbrace(u,v),~(\tilde{u},\tilde{v})\right\rbrace$.
\par Finally, we prove that $(u,v)\neq(\tilde{u},\tilde{v})$. Suppose $(u,v)=(\tilde{u},\tilde{v})$, then both $(u,v)$ and $(\tilde{u},\tilde{v})$ satisfy \eqref{a_1} and \eqref{a_2}. This implies that $x_1u\equiv0\pmod{p}$. If $p\mid x_1$, then from $8x^2_1+27y^2_1=jp$, we get that $p\mid y_1$ which is a contradiction, as $\gcd(x_1,y_1)=1$. Similarly, $p\nmid u$. Hence, $(u,v)\neq(\tilde{u},\tilde{v})$. This shows that $f$ is two-to-one.
\end{proof}
\begin{corollary}\label{main_lemma_cor}
If $p,~m,~j,$ and $U_{p,m}$ are as defined in Lemma \ref{U_p,m_A_m}, then for $m>1$ $|U_{p,m}|\equiv0\pmod{8}$ and $|U_{p,1}|=4$.
\end{corollary}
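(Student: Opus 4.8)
The plan is to reduce the counting problem to primitive representations by the auxiliary form $a^2+216b^2$ and then exploit the symmetries of that form. By Lemma \ref{U_p,m_A_m} we already know $|U_{p,m}|=2|A_m|$, where $A_m=\{(a,b)\in\mathbb{Z}^2:a^2+216b^2=jm,\ \gcd(a,b)=1\}$. So it suffices to prove that $|A_m|\equiv0\pmod4$ whenever $m>1$, and that $|A_1|=2$. The structural input is that the diagonal form $a^2+216b^2$, having discriminant $-864<-4$, admits only the four sign-change automorphisms $(a,b)\mapsto(\pm a,\pm b)$; no other integral linear substitution preserves it, since the two coefficients $1$ and $216$ are distinct. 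This group of order $4$ acts on $A_m$, and it acts \emph{freely} on every solution with $a\neq0$ and $b\neq0$, so each such solution lies in an orbit of size exactly $4$.

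First I would handle $m>1$ by ruling out vanishing coordinates. If $a=0$, then $216b^2=jm$, and primitivity forces $|b|=1$, whence $jm=216=2^3\cdot3^3$; but $\gcd(m,6)=1$ (because $pm\equiv11\pmod{24}$) and $j\in\{1,4,8\}$, so $jm$ can never equal $216$. If instead $b=0$, then $a^2=jm$ with $|a|=1$, forcing $jm=1$ and hence $m=1$, contrary to $m>1$. Thus every element of $A_m$ has both coordinates nonzero, the order-$4$ group acts freely, $A_m$ is partitioned into orbits of size $4$, and $4\mid|A_m|$. Therefore $|U_{p,m}|=2|A_m|\equiv0\pmod8$.

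For $m=1$ the relation $pm=p\equiv11\pmod{24}$ pins down $j$: by Lemmas \ref{lemma_j=4} and \ref{lemma_j=8}, the choices $j=4$ and $j=8$ would force $p\equiv5$ and $p\equiv1,7\pmod{24}$ respectively, so the only admissible value is $j=1$. Then $A_1=\{(a,b):a^2+216b^2=1,\ \gcd(a,b)=1\}=\{(\pm1,0)\}$, giving $|A_1|=2$ and hence $|U_{p,1}|=2|A_1|=4$. Note that the orbit here has size $2$ rather than $4$ precisely because the unique solution has $b=0$; this degenerate case is exactly what makes the count $4$ instead of a multiple of $8$.

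The main obstacle is conceptual rather than computational: one must be confident that the sole symmetries of $A_m$ are the four sign changes, so that generic orbits have size $4$, and then isolate the two degenerate possibilities cleanly — establishing that $a=0$ is impossible for \emph{every} $m$ under the hypothesis $\gcd(m,6)=1$, while $b=0$ can occur only in the single case $m=1$, $j=1$. Once the free action and these edge cases are settled, both divisibility assertions are immediate consequences of $|U_{p,m}|=2|A_m|$.
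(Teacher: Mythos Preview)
Your proof is correct and follows essentially the same approach as the paper: both use Lemma~\ref{U_p,m_A_m} to reduce to $|A_m|$, then invoke the four sign changes $(\pm a,\pm b)$ to get $4\mid|A_m|$ once $a,b\neq0$, and treat $m=1$ separately. Your version is slightly more explicit in ruling out the degenerate cases $a=0$ and $b=0$, and you pin down $j=1$ when $m=1$ via the paper's own Lemmas~\ref{lemma_j=4} and~\ref{lemma_j=8} rather than by citing \cite[Lemma~2.4]{Merca} as the paper does; these are cosmetic differences.
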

\begin{proof}
If $m=1$, then from Lemma 2.4 of \cite{Merca}, $a^2+216b^2=jm$ has primitive solutions if $j=1$. Therefore, $A_m=\left\lbrace(\pm0,1)\right\rbrace$, i.e., $|A_m|=2$ and $|U_{p,m}|=4$. If $m>1$, then for every $(a,b)\in A_m$, $a$ and $b$ are nonzero. Thus, every member $(a,b)$ of set $A_m$ ensures four distinct elements $(\pm a,\pm b)$ in $A_m$. Hence, $|A_m|\equiv0\pmod{4}$, and by Lemma \ref{U_p,m_A_m}, we get $|U_{p,m}|\equiv0\pmod{8}$. 
\end{proof}
Having all the required lemmas proved, we are now ready to prove Theorem \ref{thm1}.
\begin{proof}[Proof of Theorem \ref{thm1}]
Define 
\begin{align*}
M_1(u)&:=\left|\left\lbrace(x,y)\in\mathbb{Z}^2:8x^2+3y^2=u\right\rbrace\right|;\\
M_2(u)&:=\left|\left\lbrace(x,y)\in\mathbb{Z}^2:8x^2+27y^2=u\right\rbrace\right|;\\
N_1(u)&:=\left|\left\lbrace(x,y)\in M_1(u):\gcd(x,y)=1\right\rbrace\right|;\\
N_2(u)&:=\left|\left\lbrace(x,y)\in M_2(u):\gcd(x,y)=1\right\rbrace\right|.
\end{align*}
Note that there are exactly four positive, reduced, primitive quadratic forms of discriminant $-96$, namely, $x^2 + 24y^2;~ 3x^2+8y^2;~5x^2+2xy+5y^2;~4x^2+4xy+7y^2$. Let $N(-96,u)$ be the number of primitive representations of any integer $u$ by positive, reduced, primitive quadratic forms of discriminant $-96$. Then, by  \cite[Lemma 3.25]{Cox}
\begin{align*}
N(-96,u)=2\prod_{p\mid u}\left(1+\left(\frac{-6}{p}\right)_J\right),
\end{align*}
where the product is over all the prime divisors of $u$ and $\left(\frac{-6}{p}\right)_J$ is the Jacobi symbol. A positive integer $u\equiv  1, 5, 7, 11 \pmod{24}$ can be represented by one and only one of the four quadratic forms above,  as proved in \cite[pg. 84]{Dickson}. Therefore, if $u\equiv 11\pmod{24}$, then
\begin{align}\label{N_1_formula}
N_1(u)=N(-96,u).
\end{align}   
\par Observe that if $u$ is square free, then $M_i(u)=N_i(u)$ for $i\in\left\lbrace1,2\right\rbrace$. If $u=wv^2$, for some positive integers $w$ and $v$ with $w$ square free. Then, for $i\in\left\lbrace1,2\right\rbrace$
\begin{align*}
M_i(u)=\sum_{d\mid v}N_i(wd^2),
\end{align*}
where the sum is over all the positive divisors of $u$. From \cite[pg. 84]{Dickson}, if $u\equiv11\pmod{24}$, then we have
\begin{align}\label{M_1_formula}
M_1(u)=2\sum_{d\mid u}\left(\frac{-6}{d}\right)_J,
\end{align}
where the sum is over all positive divisors of $u$ and $\left(\frac{-6}{d}\right)_J$ is the Jacobi symbol.
\par By Lemma \ref{lemma_1}, for $t=p^2n+\beta p-11\cdot24^{-1}_p$, we have $24t+11=pm$, for some positive integer $m$ such that $p\nmid m$. If $(x_0,y_0)\in\mathbb{Z}^2$ is a solution for $8x^2+3y^2=24t+11$, since $24t+11=pm$ is not a perfect square, we have $x_0,y_0\neq0$. Then, $M_i(pm)\equiv0\pmod{4}$. Similarly, $N_i(pm)\equiv0\pmod{4}$.
\par We need to prove that $b_{21}(4t+1)\equiv0\pmod{2}$. Since $pm\equiv11\pmod{24}$ and $24t+11=pm$ is not perfect square, we have 
\begin{align*}
b_{21}(4t+1)&\equiv a(t)\\
&\equiv\frac{1}{4}\left(M_1(24t+11)-M_2(24t+11)\right)\\
&\equiv\frac{1}{4}\left(M_1(pm)-M_2(pm)\right)\pmod{2}. 
\end{align*}
Therefore, it is enough to show that
\begin{align*}
M_1(pm)-M_2(pm)\equiv0\pmod{8}.
\end{align*}
\par If $m=1$, we have $p\equiv11\pmod{24}$. Then, from \eqref{M_1_formula}, we get $M_1(p)=4$. From Corollary \ref{main_lemma_cor}, it follows that $M_2(p)=N_2(p)=|U_{p,1}|=4$. Hence, $M_1(p)-M_2(p)\equiv0\pmod{8}$.
\par If $m>1$, then from \eqref{N_1_formula}, we have $N_1(pm)\equiv0\pmod{8}$.\\
\underline{Case 1:} If $m$ is not a perfect square, then $pm=pwv^2$ for some positive integers $w,v$ with $w$ square free. Therefore, for $i\in\left\lbrace1,2\right\rbrace$,
\begin{align*}
	M_i(pm)=\sum_{d\mid v}N_i(pwd^2).
\end{align*}
Since $pm\equiv11\pmod{24}$, therefore, $\gcd(m,6)=\gcd(v,6)=1$, and also $\gcd(d,6)=1$ for all $d\mid v$. Thus, $v^2\equiv1\pmod{24}$ and $d^2\equiv1\pmod{24}$ for all $d\mid v$. Then for all $d\mid v$, we have $pwd^2\equiv pw\equiv pwv^2\equiv pm\equiv11\pmod{24}$. By Corollary \ref{main_lemma_cor}, $N_2(pwd^2)\equiv0\pmod{8}$ for all $d\mid v$, and thus, $M_2(pm)\equiv0\pmod{8}$. Also, from \eqref{N_1_formula}, $N_1(pwd^2)\equiv0\pmod{8}$ for all $d\mid v$ and therefore, $M_1(pm)\equiv0\pmod{8}$. Hence, $M_1(pm)-M_2(pm)\equiv0\pmod{8}$.\\
\underline{Case 2:} If $m$ is a perfect square, since $\gcd(m,6)=1$, it follows that $m\equiv1\pmod{24}$, and therefore, $p\equiv11\pmod{24}$. Then, from  \eqref{M_1_formula}, $N_1(p)=M_1(p)=4$ and from Corollary \eqref{main_lemma_cor}, $N_2(p)=|U_{p,1}|=4$. Let $m=v^2$, for some positive integer $v$. For $i\in\left\lbrace1,2\right\rbrace$, we have 
\begin{align*}
	M_i(pm)=N_i(p)+\sum_{d\mid v, d>1}N_i(pd^2).
\end{align*}
From Corollary \ref{main_lemma_cor}, it follows that $N_2(pd^2)\equiv0\pmod{8}$ for all $d\mid v$, $d>1$. Also, by \eqref{N_1_formula}, $N_1(pd^2)\equiv0\pmod{8}$ for all $d\mid v$, $d>1$. Therefore, for $i\in\left\lbrace1,2\right\rbrace$, 
\begin{align*}
M_i(pm)\equiv N_i(p)\equiv4\pmod{8}.
\end{align*}
Hence, $M_1(pm)-M_2(pm)\equiv0\pmod{8}$. This completes the proof.
\end{proof}
Next, we prove that Theorem \ref{thm1} holds for infinitely many primes. To prove this claim, we show that the Dirichlet density of the set $\mathcal{Q}$ is positive. Moreover, we show that Theorem \ref{thm1} holds for infinitely many primes congruent to $k$ modulo $24$ for each $k\in\{1, 5, 7, 11\}$. First, we introduce some definition and notation.
\begin{definition}
Let $P$ denote the set of all prime numbers. Then the Dirichlet density of set $S\subset P$, denoted by $\delta(S)$, is defined as
\begin{align*}
\delta(S):=\lim_{s\rightarrow1^{+}}\frac{\sum_{p\in S}p^{-s}}{\sum_{p\in P}p^{-s}}.
\end{align*}
\end{definition}
For $k\in \{1,5,7,11\}$, let $\mathcal{Q}_k:=\{p\in \mathcal{Q}:p\equiv k\pmod{24}\}$.
\begin{proposition}\label{Prop1}
We have $\delta(\mathcal{Q})=\frac{1}{6}$, and $\delta(\mathcal{Q}_k)=\frac{1}{24}$ for each $k\in\left\lbrace1,5,7,11\right\rbrace$.
\end{proposition}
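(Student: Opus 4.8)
The plan is to identify, for each $k\in\{1,5,7,11\}$, the set $\mathcal{Q}_k$ with the primes primitively represented by one fixed binary quadratic form of discriminant $-864$, and then to read off the density from the Chebotarev density theorem applied to the ring class field of that discriminant.

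First I would carry out the reduction. Both forms in Lemma~\ref{U_p,m_A_m}, namely $8x^2+27y^2$ and $a^2+216b^2$, have discriminant $-864$, and completing the square gives the identities $8x^2+8xy+29y^2=2(2x+y)^2+27y^2$ and $4x^2+4xy+55y^2=(2x+y)^2+54y^2$. Combining these with the parity and $val_2$ analysis of Lemmas~\ref{lemma_j=1}, \ref{lemma_j=4} and~\ref{lemma_j=8}, I would prove that $p\in\mathcal{Q}_k$ if and only if $p$ is represented by the form $Q_k$, where $Q_1=x^2+216y^2$, $Q_5=8x^2+8xy+29y^2$, $Q_7=4x^2+4xy+55y^2$ and $Q_{11}=8x^2+27y^2$. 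Indeed $j=1$ yields $p=8x^2+27y^2$; the primitive solutions of $4p=8x^2+27y^2$ correspond (via $y=2w$) to $p=2(\text{odd})^2+27(\text{odd})^2$, i.e.\ to $p=Q_5(x,y)$; and the primitive solutions of $8p=8x^2+27y^2$ split according to $val_2(y)$ into the representations $p=x^2+216w^2=Q_1(x,w)$ when $p\equiv1$ and $p=s^2+54w^2=Q_7(x,y)$ when $p\equiv7\pmod{24}$.

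Next I would record the relevant class-group data for the discriminant $-864$. There are exactly twelve reduced primitive forms, so the form class number is $h=12$, and precisely four of them are ambiguous (equivalent to their own inverse), namely $Q_1,Q_5,Q_7,Q_{11}$. These four classes make up the $2$-torsion subgroup $C[2]$ of the class group $C$, and since $|C/C^2|=|C[2]|=4$ each lies in a different genus; the genus is detected by the residue modulo $24$ of the primes it represents, which is exactly how the four classes $\{1,5,7,11\}$ appear. In particular $\mathcal{Q}_1,\mathcal{Q}_5,\mathcal{Q}_7,\mathcal{Q}_{11}$ are pairwise disjoint. For the density I would pass to the ring class field $L$ of the order of discriminant $-864$ (conductor $6$) in $K=\mathbb{Q}(\sqrt{-6})$: here $\mathrm{Gal}(L/K)\cong C$ and $\mathrm{Gal}(L/\mathbb{Q})$ is generalized dihedral of order $2h=24$, complex conjugation acting on $C$ by inversion. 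A prime $p\nmid6$ is represented by $Q_k$ exactly when $\mathrm{Frob}_p$ falls in the conjugacy class of the corresponding class $\mathfrak{c}_k$; as $\mathfrak{c}_k=\mathfrak{c}_k^{-1}$ this conjugacy class is the singleton $\{\mathfrak{c}_k\}$, so Chebotarev gives $\delta(\mathcal{Q}_k)=1/|\mathrm{Gal}(L/\mathbb{Q})|=1/24$. Summing over the four disjoint sets yields $\delta(\mathcal{Q})=4\cdot\frac{1}{24}=\frac16$.

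The hard part will be the first step: tracking primitivity and the $2$-adic valuations precisely enough to show that each condition $8x^2+27y^2=jp$ pins down representation by the single ambiguous form $Q_k$ rather than by its entire genus (which would give the wrong density $1/8$). The square-completion identities and Lemmas~\ref{lemma_j=4} and~\ref{lemma_j=8} are exactly what force the finer, single-class conclusion. A secondary point is the density input for one ambiguous form; this relies on the generalized-dihedral structure of $\mathrm{Gal}(L/\mathbb{Q})$, which makes an ambiguous class a singleton Frobenius class of density $1/(2h)$ rather than a size-two class of density $1/h$. If one prefers to avoid the ring class field, the same density can be extracted by expressing $\sum_{p\in\mathcal{Q}_k}p^{-s}$ through the Hecke $L$-functions attached to the characters of $C$ and examining the pole at $s=1$.
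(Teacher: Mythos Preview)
Your argument is correct and takes a genuinely different route from the paper's. The paper does not compute the density directly; instead it shows, by an explicit case analysis on $p\pmod{24}$, that $\mathcal{Q}=\mathcal{P}$ (and hence $\mathcal{Q}_k=\mathcal{P}_k$), where $\mathcal{P}$ is the set of primes $p$ for which $x^2+216y^2=jp$ has a primitive solution for some $j\in\{1,4,8\}$, and then simply cites \cite[Proposition~2.9]{Merca} for $\delta(\mathcal{P}_k)=\tfrac{1}{24}$. Your approach is more self-contained: you identify each $\mathcal{Q}_k$ with the primes represented by a single ambiguous form $Q_k$ of discriminant $-864$, verify $h(-864)=12$, and invoke the ring class field plus Chebotarev (equivalently, Cox's Theorem~9.12) to get $\delta(\mathcal{Q}_k)=\tfrac{1}{2h}=\tfrac{1}{24}$. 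The square-completion identities you use---$8x^2+8xy+29y^2=2(2x+y)^2+27y^2$ and $4x^2+4xy+55y^2=(2x+y)^2+54y^2$---together with the $2$-adic information in Lemmas~\ref{lemma_j=4} and~\ref{lemma_j=8}, do pin down a single class rather than a whole genus, which is exactly the point you flag as the delicate step. What the paper's approach buys is brevity and an explicit link to the earlier literature on $b_3(n)$; what yours buys is a transparent explanation of where the numbers $12$ and $24$ come from, without appealing to an external density computation.
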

\begin{remark}
In \cite[Proposition 2.9]{Merca}, Ballantine, Merca and Radu proved that the Dirichlet density of $\mathcal{P}$ is equal to $1/6$ and $\delta(\mathcal{P}_k)=1/24$ for $k\in\left\lbrace1,5,7,11\right\rbrace$, where $\mathcal{P}_k:=\{p\in \mathcal{P}:p\equiv k\pmod{24}\}$. Recall that $\mathcal{P}$ denotes the set of primes $p$ such that the Diophantine equation $x^2+24\cdot 9y^2=jp$ has primitive solutions for some $j\in \{1, 4, 8\}$. A proof of Proposition \ref{Prop1} can be given as done in \cite[Proposition 2.9]{Merca}. However, we prove Proposition \ref{Prop1} by showing that the sets $\mathcal{Q}$ and $\mathcal{P}$ are the same, and hence $\mathcal{P}_k=\mathcal{Q}_k$ for $k=1, 5, 7, 11$. 
\end{remark}
\begin{proof}[Proof of Proposition \ref{Prop1}]
We consider four cases depending on $p$ modulo $24$ and in each case, we prove that the sets $\mathcal{Q}$ and $\mathcal{P}$ contain the same set of primes.\\
\underline{Case I} $p\equiv1\pmod{24}$. Let $p\in\mathcal{Q}$. Then by Lemma \ref{lemma_j=8}, $8x^2+27y^2=8p$ has primitive solutions, say $(x_0,y_0)$, such that $8\mid y_0$. Since $\gcd(x_0,y_0)=1$, we have $\gcd(x_0,\frac{y_0}{8})=1$ and therefore, $(x_0,\frac{y_0}{8})$ is a primitive solution for $x^2+216y^2=p$. Thus, $p\in\mathcal{P}$.\\
Conversely, let $p\in\mathcal{P}$. Then, from \cite[Lemma 2.4]{Merca}, $x^2+216y^2=p$ has primitive solutions, say $(x_1,y_1)$. Clearly, $2\nmid x_1$ and therefore, $\gcd(x_1,8y_1)=1$. Thus, $(x_1,8y_1)$ is a primitive solution for $8x^2+27y^2=8p$. This implies that $p\in\mathcal{Q}$.\\
\underline{Case II} $p\equiv5\pmod{24}$. Let $p\in\mathcal{Q}$. Then by Lemma \ref{lemma_j=4}, $8x^2+27y^2=4p$ has primitive solutions, say $(x_0,y_0)$. Clearly, $2\mid y_0$ and $4\nmid y_0$. This implies that $\gcd(4x_0,\frac{y_0}{2})=1$ and thus, $(4x_0,\frac{y_0}{2})$ is a primitive solution for $x^2+216y^2=8p$. Therefore, $p\in\mathcal{P}$.\\
Conversely, let $p\in\mathcal{P}$. Then, from \cite[Lemma 2.6]{Merca}, $x^2+216y^2=8p$ has primitive solutions, say $(x_1,y_1)$, such that $4\mid x_1$ but $8\nmid x_1$. Since $\gcd(x_1,y_1)=1$, we have $\gcd(\frac{x_1}{4},2y_1)=1$ and therefore, $(\frac{x_1}{4},2y_1)$ is a primitive solution for $8x^2+27y^2=4p$. Thus, $p\in\mathcal{Q}$.\\
\underline{Case III} $p\equiv7\pmod{24}$. Let $p\in\mathcal{Q}$. As seen in Case I, from Lemma \ref{lemma_j=8}, $8x^2+27y^2=8p$ has primitive solutions, say $(x_0,y_0)$, such that $4\mid y_0$ but $8\nmid y_0$. Then, $(2x_0,\frac{y_0}{4})$ is a primitive solution for $x^2+216y^2=4p$ and therefore, $p\in\mathcal{P}$.\\
Conversely, let $p\in\mathcal{P}$. Then, from \cite[Lemma 2.5]{Merca}, $x^2+216y^2=4p$ has primitive solutions, say $(x_1,y_1)$. Clearly, $2\mid x_1$ and $4\nmid x_1$, and therefore, $(\frac{x_1}{2},4y_1)$ is a primitive solution for $8x^2+27y^2=8p$. Thus, $p\in\mathcal{Q}$.\\
\underline{Case IV} $p\equiv11\pmod{24}$. Let $p\in\mathcal{Q}$. Then by Lemma \ref{lemma_j=1}, $8x^2+27y^2=p$ has primitive solutions, say $(x_0,y_0)$. Here, $2\nmid y_0$ and therefore, $(8x_0,y_0)$ is a primitive solution for $x^2+216y^2=8p$. Hence, $p\in\mathcal{P}$.\\
Conversely, let $p\in\mathcal{P}$. Then, from \cite[Lemma 2.6]{Merca}, $x^2+216y^2=8p$ has primitive solutions, say $(x_1,y_1)$, such that $8\mid x_1$. Therefore, $(\frac{x_1}{8},y_1)$ is a primitive solution for $8x^2+27y^2=p$. Thus, $p\in\mathcal{Q}$.
\par This completes the proof of the proposition.
\end{proof}
In \cite[Theorem 1.4]{SB1}, the first and the third author proved Theorem \ref{thm1} for $p=29$ using the approach developed in \cite{radu1, radu2}. We now give another proof of Theorem \ref{thm1} for the primes $p=59,79$ using that approach. Throughout this section, $\Gamma$ denotes the full modular group $\text{SL}_2(\mathbb{Z})$ and define 
  \begin{align*}
  \Gamma_{\infty} & :=\left\{
\begin{bmatrix}
1  &  n \\
0  &  1      
\end{bmatrix}: n\in \mathbb{Z}  \right\}.
  \end{align*}
We recall that the index of $\Gamma_{0}(N)$ in $\Gamma$ is
\begin{align*}
 [\Gamma : \Gamma_0(N)] = N\prod_{p|N}(1+p^{-1}),
\end{align*}
where $p$ denotes a prime.
\par 
For a positive integer $M$, let $R(M)$ be the set of integer sequences $r=(r_\delta)_{\delta|M}$ indexed by the positive divisors of $M$. 
If $r \in R(M)$ and $1=\delta_1<\delta_2< \cdots <\delta_k=M$ 
are the positive divisors of $M$, we write $r=(r_{\delta_1}, \ldots, r_{\delta_k})$. Define $c_r(n)$ by 
\begin{align}
\sum_{n=0}^{\infty}c_r(n)q^n:=\prod_{\delta|M}(q^{\delta};q^{\delta})^{r_{\delta}}_{\infty}=\prod_{\delta|M}\prod_{n=1}^{\infty}(1-q^{n \delta})^{r_{\delta}}.
\end{align}
The approach to prove congruences for $c_r(n)$ developed by Radu \cite{radu1, radu2} reduces the number of coefficients that one must check as compared with the classical method which uses Sturm's bound alone.
\par 
Let $m$ be a positive integer. For any integer $s$, let $[s]_m$ denote the residue class of $s$ in $\mathbb{Z}_m:= \mathbb{Z}/ {m\mathbb{Z}}$. 
Let $\mathbb{Z}_m^{*}$ be the set of all invertible elements in $\mathbb{Z}_m$. Let $\mathbb{S}_m\subseteq\mathbb{Z}_m$  be the set of all squares in $\mathbb{Z}_m^{*}$. For $t\in\{0, 1, \ldots, m-1\}$
and $r \in R(M)$, we define a subset $P_{m,r}(t)\subseteq\{0, 1, \ldots, m-1\}$ by
\begin{align*}
P_{m,r}(t):=\left\{t': \exists [s]_{24m}\in \mathbb{S}_{24m} ~ \text{such} ~ \text{that} ~ t'\equiv ts+\frac{s-1}{24}\sum_{\delta|M}\delta r_\delta \pmod{m} \right\}.
\end{align*}
\begin{definition}
	Suppose $m, M$, and $N$ are positive integers, $r=(r_{\delta})\in R(M)$, and $t\in \{0, 1, \ldots, m-1\}$. Let $k=k(m):=\gcd(m^2-1,24)$ and write  
	\begin{align*}
	\prod_{\delta|M}\delta^{|r_{\delta}|}=2^s\cdot j,
	\end{align*}
	where $s$ and $j$  are nonnegative integers with $j$ odd. The set $\Delta^{*}$ consists of all tuples $(m, M, N, (r_{\delta}), t)$ satisfying these conditions and all of the following.
	\begin{enumerate}
		\item Each prime divisor of $m$ is also a divisor of $N$.
		\item $\delta|M$ implies $\delta|mN$ for every $\delta\geq1$ such that $r_{\delta} \neq 0$.
		\item $kN\sum_{\delta|M}r_{\delta} mN/\delta \equiv 0 \pmod{24}$.
		\item $kN\sum_{\delta|M}r_{\delta} \equiv 0 \pmod{8}$.  
		\item  $\frac{24m}{\gcd{(-24kt-k{\sum_{{\delta}|M}}{\delta r_{\delta}}},24m)}$ divides $N$.
		\item If $2|m$, then either $4|kN$ and $8|sN$ or $2|s$ and $8|(1-j)N$.
	\end{enumerate}
\end{definition}
Let $m, M, N$ be positive integers. For $\gamma=
\begin{bmatrix}
	a  &  b \\
	c  &  d     
\end{bmatrix} \in \Gamma$, $r\in R(M)$ and $r'\in R(N)$, set 
	\begin{align*}
	p_{m,r}(\gamma):=\min_{\lambda\in\{0, 1, \ldots, m-1\}}\frac{1}{24}\sum_{\delta|M}r_{\delta}\frac{\gcd^2(\delta a+ \delta k\lambda c, mc)}{\delta m}
	\end{align*}
and 
\begin{align*}
	p_{r'}^{*}(\gamma):=\frac{1}{24}\sum_{\delta|N}r'_{\delta}\frac{\gcd^2(\delta, c)}{\delta}.
\end{align*}
\begin{lemma}\label{lem1}\cite[Lemma 4.5]{radu1} Let $u$ be a positive integer, $(m, M, N, r=(r_{\delta}), t)\in\Delta^{*}$ and $r'=(r'_{\delta})\in R(N)$. 
Let $\{\gamma_1,\gamma_2, \ldots, \gamma_n\}\subseteq \Gamma$ be a complete set of representatives of the double cosets of $\Gamma_{0}(N) \backslash \Gamma/ \Gamma_\infty$. 
Assume that $p_{m,r}(\gamma_i)+p_{r'}^{*}(\gamma_i) \geq 0$ for all $1 \leq i \leq n$. Let $t_{min}=\min_{t' \in P_{m,r}(t)} t'$ and
\begin{align*}
\nu:= \frac{1}{24}\left\{ \left( \sum_{\delta|M}r_{\delta}+\sum_{\delta|N}r'_{\delta}\right)[\Gamma:\Gamma_{0}(N)] -\sum_{\delta|N} \delta r'_{\delta}\right\}-\frac{1}{24m}\sum_{\delta|M}\delta r_{\delta} 
	- \frac{ t_{min}}{m}.
\end{align*}	
If the congruence $c_r(mn+t')\equiv0\pmod u$ holds for all $t' \in P_{m,r}(t)$ and $0\leq n\leq \lfloor\nu\rfloor$, then it holds for all $t'\in P_{m,r}(t)$ and $n\geq0$.
\end{lemma}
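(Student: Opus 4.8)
The plan is to realize the arithmetic-progression section of the eta-quotient $\prod_{\delta|M}(q^\delta;q^\delta)_\infty^{r_\delta}$ as a holomorphic modular form on $\Gamma_0(N)$ and then invoke Sturm's bound. Writing $\eta(\tau)=q^{1/24}\prod_{n\geq1}(1-q^n)$ with $q=e^{2\pi i\tau}$, set $g(\tau):=\prod_{\delta|M}\eta(\delta\tau)^{r_\delta}$, so that $g$ is, up to the rational power $q^{\frac{1}{24}\sum_\delta\delta r_\delta}$, the generating series $\sum_n c_r(n)q^n$. First I would introduce the section operator extracting the residue class $t$ modulo $m$: summing $g$ over the translates $\tau\mapsto\tau+\lambda$, $\lambda\in\{0,\dots,m-1\}$, twisted by $m$-th roots of unity, picks out exactly the coefficients $c_r(n)$ with $n$ in the progression. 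The set $P_{m,r}(t)$ records precisely which residue classes are linked to $t$ under the action of the squares $\mathbb{S}_{24m}$, and $t_{min}$ is the smallest such class; normalizing by $q^{-t_{min}/m}$ turns the resulting series into a genuine $q$-series in integer powers.

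Next I would multiply this section by the auxiliary eta-quotient $h(\tau):=\prod_{\delta|N}\eta(\delta\tau)^{r'_\delta}$ and show that the product $F:=(\text{section of }g)\cdot h$ is a holomorphic modular form of weight $w=\tfrac12\big(\sum_{\delta|M}r_\delta+\sum_{\delta|N}r'_\delta\big)$ on $\Gamma_0(N)$, possibly with character. The six conditions defining $\Delta^{*}$ are exactly what is needed here: conditions (1)--(4) together with (6) force the correct transformation law under the generators of $\Gamma_0(N)$, being the Ligozat/Newman congruence conditions adapted to the $t$-section that control the level $N$, the integrality of the weight and character, and the behaviour at the prime $2$, while condition (5) guarantees that the normalizing cusp width is compatible with $N$. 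The holomorphicity of $F$ at every cusp is precisely the content of the hypothesis $p_{m,r}(\gamma_i)+p_{r'}^{*}(\gamma_i)\geq0$: the two quantities $p_{m,r}(\gamma)$ and $p_{r'}^{*}(\gamma)$ compute, via the standard order-of-vanishing formula for eta-quotients, the contributions of the section of $g$ and of $h$ to the order of $F$ at the cusp represented by $\gamma$, and running $\gamma$ over a complete set of double-coset representatives of $\Gamma_0(N)\backslash\Gamma/\Gamma_\infty$ covers all cusps.

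Having placed $F$ in the finite-dimensional space $M_w(\Gamma_0(N))$ with $u$-integral coefficients, I would finish by applying Sturm's theorem: a holomorphic modular form of weight $w$ on $\Gamma_0(N)$ whose $q$-expansion coefficients vanish modulo $u$ up to the Sturm bound $\tfrac{w}{12}[\Gamma:\Gamma_0(N)]$ vanishes modulo $u$ identically. Unwinding the definitions, the quoted quantity $\nu$ is exactly this Sturm bound $\tfrac{1}{24}\big(\sum_{\delta|M}r_\delta+\sum_{\delta|N}r'_\delta\big)[\Gamma:\Gamma_0(N)]$ re-expressed after accounting for the auxiliary factor $-\tfrac{1}{24}\sum_{\delta|N}\delta r'_\delta$ from the $q$-order of $h$, the term $-\tfrac{1}{24m}\sum_{\delta|M}\delta r_\delta$ from the $q$-order of $g$, and the shift $-t_{min}/m$ from the normalization; requiring $c_r(mn+t')\equiv0\pmod u$ for all $t'\in P_{m,r}(t)$ and $0\leq n\leq\lfloor\nu\rfloor$ is thus exactly the input needed to force $F\equiv0\pmod u$, hence the congruence for all $n\geq0$.

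The main obstacle is the middle step: verifying that the $t$-section of the eta-quotient, after normalization and multiplication by $h$, genuinely transforms as a modular form on $\Gamma_0(N)$. This requires tracking the action of $\Gamma_0(N)$ through the double-coset decomposition together with the attendant Gauss-sum and root-of-unity bookkeeping, and checking that conditions (1)--(6) are not merely sufficient but correctly encode the transformation and cusp data. The order-of-vanishing computations captured by $p_{m,r}$ and $p_{r'}^{*}$, in particular the minimization over $\lambda$, are the delicate part and constitute the technical heart of Radu's argument.
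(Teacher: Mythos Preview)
The paper does not prove this lemma at all: it is quoted verbatim as \cite[Lemma 4.5]{radu1} and used as a black box, with no argument supplied. So there is nothing in the paper to compare your proposal against.

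That said, your sketch is an accurate high-level outline of Radu's original proof: one does build the $U_m$-type section $\sum_{t'\in P_{m,r}(t)}\sum_n c_r(mn+t')q^{n}$ of the eta-quotient, multiply by the auxiliary $\prod_{\delta\mid N}\eta(\delta\tau)^{r'_\delta}$, use the $\Delta^{*}$ conditions to force modularity on $\Gamma_0(N)$, read off the cusp orders as $p_{m,r}(\gamma)+p_{r'}^{*}(\gamma)$, and finish with Sturm. Your identification of $\nu$ as the Sturm bound adjusted by the $q$-orders of $g$, $h$, and the $t_{\min}$-shift is correct, and you are right that the delicate point is the modularity verification and the minimization over $\lambda$ in $p_{m,r}$. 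As a proof \emph{sketch} this is fine; as a self-contained proof it would of course need the explicit transformation-law computations from Radu's paper, which is precisely why the present paper cites rather than reproves the result.
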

	To apply Lemma \ref{lem1}, we utilize the following result, which gives us a complete set of representatives of the double coset in $\Gamma_{0}(N) \backslash \Gamma/ \Gamma_\infty$. 
\begin{lemma}\label{lem2}\cite[Lemma 4.3]{wang} If $N$ or $\frac{1}{2}N$ is a square-free integer, then
		\begin{align*}
		\bigcup_{\delta|N}\Gamma_0(N)\begin{bmatrix}
		1  &  0 \\
		\delta  &  1      
		\end{bmatrix}\Gamma_ {\infty}=\Gamma.
		\end{align*}
\end{lemma}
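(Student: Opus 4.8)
The plan is to prove Lemma \ref{lem2}, which gives an explicit complete set of representatives for the double cosets $\Gamma_0(N) \backslash \Gamma / \Gamma_\infty$ when $N$ or $\frac{1}{2}N$ is squarefree. Since this result is attributed to \cite{wang}, I treat the task as reconstructing the argument. First I would recall that the cosets $\Gamma_0(N)\gamma\Gamma_\infty$ are governed by the bottom row $(c, d)$ of $\gamma = \begin{bmatrix} a & b \\ c & d \end{bmatrix}$, since right multiplication by $\Gamma_\infty$ fixes $c$ and shifts $d$ by multiples of $c$, while left multiplication by $\Gamma_0(N)$ acts on $(c,d)$ in a controlled way. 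The natural invariant attached to a double coset is therefore the value $\gcd(c, N)$, or more precisely the pair recording $c \bmod N$ up to the equivalences imposed by both sides.

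The key steps, in order, would be as follows. First I would show that each representative $\begin{bmatrix} 1 & 0 \\ \delta & 1 \end{bmatrix}$ for $\delta \mid N$ lies in a distinct double coset, by checking that no two of them can be related via $\Gamma_0(N)$ on the left and $\Gamma_\infty$ on the right; here the relevant invariant is $\gcd(\delta, N)$, which equals $\delta$ when $\delta \mid N$, so distinct divisors give distinct invariants. Second, and this is the substantive direction, I would show that every $\gamma \in \Gamma$ falls into one of these double cosets. Given arbitrary $\gamma$ with bottom row $(c,d)$, set $g = \gcd(c, N)$; the goal is to produce $\sigma \in \Gamma_0(N)$ and $\tau \in \Gamma_\infty$ with $\sigma \gamma \tau = \begin{bmatrix} 1 & 0 \\ \delta & 1 \end{bmatrix}$ for an appropriate divisor $\delta$ of $N$. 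The strategy is to first use $\Gamma_\infty$ on the right to adjust $d$ modulo $c$ and $\Gamma_0(N)$ on the left to reduce $c$ modulo $N$ into a divisor of $N$, exploiting that left multiplication by $\begin{bmatrix} 1 & 0 \\ n N & 1 \end{bmatrix}$ translates $c$ by $dnN$ and that $\gcd(d, N)$ can be arranged coprime to the relevant piece.

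The main obstacle — and the precise point where the hypothesis that $N$ or $\frac{1}{2}N$ is squarefree enters — is the surjectivity step: for general $N$ the double coset space is indexed not merely by divisors $\delta \mid N$ but by a finer set of pairs $(\delta, \text{class mod } \gcd(\delta, N/\delta))$, so distinct double cosets can share the same $\gcd(c,N)$. The squarefree (or $\frac{1}{2}N$ squarefree) condition forces $\gcd(\delta, N/\delta)$ to be $1$ (or at worst a controlled power of $2$), which collapses this finer data and guarantees that $\gcd(c,N)$ alone is a complete invariant. I would therefore carry out the reduction by the Chinese Remainder Theorem prime-by-prime: for each prime $p \mid N$, the squarefree condition means $p \| N$, so the local component of the double coset is determined solely by $\mathrm{val}_p(\gcd(c,N)) \in \{0, 1\}$, which is exactly the data encoded by whether $p \mid \delta$. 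Assembling these local choices via CRT yields the unique divisor $\delta \mid N$ representing the coset, completing both containment and disjointness. The half-integer case $\frac{1}{2}N$ squarefree requires separately handling the prime $2$, where $4 \mid N$ but $8 \nmid N$ permits $\mathrm{val}_2(\gcd(c,N)) \in \{0,1,2\}$; I expect this $2$-adic bookkeeping to be the most delicate part of the verification.
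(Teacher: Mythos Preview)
The paper does not prove Lemma~\ref{lem2}; it is quoted as a known result from \cite[Lemma~4.3]{wang} and invoked as a black box in applying Radu's algorithm. There is therefore no argument in the paper to compare your proposal against.

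That said, your outline is the standard one and is essentially correct. It amounts to the classification of cusps of $\Gamma_0(N)$: a complete set of inequivalent cusps is given by fractions $a/c$ with $c\mid N$ and $a$ ranging over $(\mathbb{Z}/\gcd(c,N/c)\mathbb{Z})^{\times}$, so the number of double cosets is $\sum_{c\mid N}\phi(\gcd(c,N/c))$. The hypothesis that $N$ or $\tfrac{1}{2}N$ is squarefree forces $\gcd(c,N/c)\in\{1,2\}$ for every $c\mid N$, hence $\phi(\gcd(c,N/c))=1$ in all cases, and exactly one representative per divisor suffices. Two minor remarks on your write-up: first, the lemma as stated asserts only that the union equals $\Gamma$, not disjointness, so your distinctness step is superfluous for the stated claim; second, in the $\tfrac{1}{2}N$-squarefree case the crucial fact is not that $\gcd(\delta,N/\delta)=1$ but that $\phi(\gcd(\delta,N/\delta))=1$ even when the gcd equals $2$, which is what makes the $2$-adic bookkeeping come out cleanly.
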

Here, we prove the theorem for $p=59$ and omit the details of the proof for the case when $p=79$.
\par By \eqref{thm1.1}, we have
\begin{align}
	\sum_{n=0}^{\infty}b_{21}(4n+1)q^n\equiv \frac{f_3^4}{f_1}\pmod 2.
\end{align}
Let $(m,M,N,r,t)=(3481,3,177,(-1,4),297)$. We verify that $(m,M,N,r,t) \in \Delta^{*}$ and $ P_{m,r}(t)=\{ 2,61,120,297,356,415,474,592,651,710,887,1064,1182,1300,1359,\\ 1418,1536,1713,1949,2067,2185,2244,2362,2421,2657,2952,3011,3365,3424\}$.
By Lemma \ref{lem2}, we know that $\left\{\begin{bmatrix}
	1  &  0 \\
	\delta  &  1      
\end{bmatrix}:\delta|177 \right\}$ forms a complete set of double coset representatives of $\Gamma_{0}(N) \backslash \Gamma/ \Gamma_\infty$.
Let $\gamma_{\delta}=\begin{bmatrix}
	1  &  0 \\
	\delta  &  1      
\end{bmatrix}$. Let $r'=(0,0,0,0)\in R(177)$ and we use $Sage$ to verify that
$p_{m,r}(\gamma_{\delta})+p_{r'}^{*}(\gamma_{\delta}) \geq 0$ for each $\delta | N$. We compute that the upper bound in Lemma \ref{lem1} is $\lfloor\nu\rfloor=29$. Using $Sage$, we verify that $b_{21}(4(3481n+t')+1)\equiv0\pmod{2}$ for all $t' \in P_{m,r}(t)$ and for $n\leq 29$. By Lemma \ref{lem1}, we conclude that $b_{21}(4(3481n+t')+1)\equiv0\pmod{2}$ for all $t' \in P_{m,r}(t)$ and for all $n\geq 0$. Next, we take $(m,M,N,r,t)=(3481,3,177,(-1,4),533)$. It is easy to verify that $(m,M,N,r,t) \in \Delta^{*}$ and $P_{m,r}(t)=\{179,238,533,769,828,946,1005,1123,1241,\\ 1477, 1654,1772,1831,1890,2008,2126,2303,2480,2539,2598,2716,2775,2834, 2893,\\ 3070,3129,3188,3247,3306\}$. Following similar steps as shown before, we find that $b_{21}(4(3481n+t')+1)\equiv0\pmod{2}$ for all $t' \in P_{m,r}(t)$ and for all $n\geq 0$.
\par Theorem \ref{thm1}, for $p=59$ says that
\begin{align*}
b_{21}(4(3481n+59\beta+297)+1)\equiv 0 \pmod{2},
\end{align*}
for $0\leq\beta<59$, $\beta\neq22$. Notice that the sets $P_{m,r}(t)$ include 58 numbers, excluding $1595=59\cdot22+297$. This completes the proof when $p=59$.
\par Similarly, for $p=79$, we take $(m,M,N,r,t)=(6241,3,237,(-1,4),253),(6241,3,\\ 237,(-1,4),332)\in\Delta^{*}$ and in both the cases, we get $\lfloor\nu\rfloor=39$ . Using $Sage$, we verify that $b_{21}(4(6241n+t')+1)\equiv0\pmod{2}$ for all $t' \in P_{m,r}(t)$ and for $n\leq39$. Therefore, by Lemma \ref{lem1} we conclude that $b_{21}(4(6241n+t')+1)\equiv0\pmod{2}$ for all $t' \in P_{m,r}(t)$ and for all $n\geq 0$. This completes the proof when $p=79$.
\section{Proof of theorem \ref{Newmann_b21}}
\begin{proof}
We define
\begin{align}\label{thm5.1}
\sum_{n=0}^{\infty}c(n)q^n:=\frac{f^4_3}{f_1}.
\end{align}
Then by \eqref{thm1.1} and \eqref{thm5.1}, for all $n\geq0$, we get
\begin{align}\label{relation_in_b_c}
b_{21}(4n+1)\equiv c(n)\pmod{2}.
\end{align}
Now, Theorem 3 of \cite{Newman} yields
\begin{align}\label{newmann_main}
c\left(p^2n+11\left( \frac{p^2-1}{24}\right) \right)-\gamma(n)c(n)+c\left(\frac{n-11\left( \frac{p^2-1}{24}\right) }{p^2}\right) \equiv0\pmod{2}.
\end{align}
In \eqref{newmann_main}
\begin{align}\label{alpha_value}
\gamma(n)\equiv pd(p)+\left(\frac{n-11\left( \frac{p^2-1}{24}\right) }{p}\right)_L\pmod{2},
\end{align}
where $d(p)$ is a function of $p$ and $\left(\frac{.}{p}\right)_L$ denotes the Legendre symbol. Note that $c(0)=1$ and $c(\ell)=0$ for all $\ell<0$. Put $n=0$ in \eqref{newmann_main}, to get
\begin{align}\label{2.3}
c\left(11\left(\frac{p^2-1}{24}\right) \right)\equiv\gamma(0)\pmod{2}. 
\end{align}
By \eqref{alpha_value}, we get
\begin{align}\label{2.3.1}
\gamma(0)\equiv pd(p)+1\pmod{2}.
\end{align}
Employing \eqref{2.3.1} in \eqref{2.3}, we arrive at
\begin{align}\label{2.3.2}
pd(p)\equiv c\left(11\left(\frac{p^2-1}{24}\right)\right)+1\pmod{2}. 
\end{align}
Then \eqref{alpha_value} and \eqref{2.3.2} yield
\begin{align}\label{alpha_new_value}
\gamma(n)\equiv c\left(11\left(\frac{p^2-1}{24}\right)\right)+1+\left(\frac{n-11\left( \frac{p^2-1}{24}\right) }{p}\right)_L\pmod{2}.
\end{align}
Combining \eqref{newmann_main} and \eqref{alpha_new_value} gives
\begin{align}\label{newmann_main_1}
c\left(p^2n+11\left( \frac{p^2-1}{24}\right) \right)\equiv&\left(c\left(11\left(\frac{p^2-1}{24}\right)\right)+1+\left(\frac{n-11\left( \frac{p^2-1}{24}\right) }{p}\right)_L\right)c(n)\nonumber\\
&+c\left(\frac{n-11\left( \frac{p^2-1}{24}\right) }{p^2}\right)\pmod{2}.
\end{align}
Replacing $n$ by $pn+11\left( \frac{p^2-1}{24}\right)$ in \eqref{newmann_main_1}, we deduce that
\begin{align}\label{newmann_main_2}
c\left(p^3n+11\left( \frac{p^4-1}{24}\right) \right)\equiv&\left(c\left(11\left(\frac{p^2-1}{24}\right)\right)+1\right)c\left( pn+11\left( \frac{p^2-1}{24}\right)\right)\nonumber\\ 
&+c\left(\frac{n}{p}\right)\pmod{2}.
\end{align}
\underline{Case $(i)$:}~$b_{21}\left(\frac{11p^2-5}{6}\right)\equiv1\pmod{2}$. In this case, by \eqref{relation_in_b_c},  $c\left(11\left( \frac{p^2-1}{24}\right) \right)\equiv1\pmod{2}$. Then from \eqref{newmann_main_2}, we have
\begin{align}\label{newmann_main_3}
c\left(p^3n+11\left( \frac{p^4-1}{24}\right) \right)\equiv c\left(\frac{n}{p}\right)\pmod{2}.
\end{align}
Next, replace $n$ by $pn+\beta$ in \eqref{newmann_main_3} with $1\leq\beta<p$:
\begin{align}\label{newmann_main_4}
c\left(p^3\left( pn+\beta\right) +11\left( \frac{p^4-1}{24}\right) \right)\equiv0\pmod{2}.
\end{align}
Substituting $n$ by $pn$ in \eqref{newmann_main_3}, we obtain
\begin{align}\label{newmann_main_4.1}
c\left(p^4n+11\left( \frac{p^4-1}{24}\right) \right)\equiv c\left(n\right)\pmod{2}.
\end{align}
Using \eqref{newmann_main_4.1} repeatedly, we obtain that, for $n, k\geq0$,
\begin{align}\label{newmann_main_5}
c\left(p^{4k}n+11\left( \frac{p^{4k}-1}{24}\right) \right)\equiv c\left(n\right)\pmod{2}.
\end{align}
Finally, replacing $n$ by $p^3(pn+\beta)+11\left( \frac{p^4-1}{24}\right)$ in \eqref{newmann_main_5}, we arrive at
\begin{align}\label{newmann_main_6}
c\left(p^{4k+4}n+p^{4k+3}\beta +11\left( \frac{p^{4k+4}-1}{24}\right) \right)\equiv0\pmod{2}.
\end{align}
Combining \eqref{relation_in_b_c} and \eqref{newmann_main_6}, we deduce \eqref{Yao_1}.
\par Putting $n=0$ in \eqref{newmann_main_5}, we get
\begin{align*}
	c\left(11\left( \frac{p^{4k}-1}{24}\right) \right)\equiv1\pmod{2},
\end{align*}
which, when combined with \eqref{relation_in_b_c}, yields \eqref{Yao_1a}.\\
\underline{Case $(ii)$:}~$b_{21}\left(\frac{11p^2-5}{6}\right)\equiv0\pmod{2}$. In this case, by \eqref{relation_in_b_c},  $c\left(11\left( \frac{p^2-1}{24}\right) \right)\equiv0\pmod{2}$. Then, substituting $n$ by $np$ in \eqref{newmann_main_2}, we get
\begin{align}\label{newmann_main_7}
	c\left(p^4n+11\left( \frac{p^4-1}{24}\right) \right)\equiv c\left( p^2n+11\left( \frac{p^2-1}{24}\right)\right)+c(n)\pmod{2}.
\end{align}
Substituting $n$ by $p^2n+11\left( \frac{p^2-1}{24}\right)$ in \eqref{newmann_main_7}, we arrive at the following re-occurrence relation:
\begin{align}\label{newmann_main_7.1}
	c\left(p^6n+11\left( \frac{p^6-1}{24}\right) \right)\equiv c(n)\pmod{2}.
\end{align}
Iterating re-occurrence relation \eqref{newmann_main_7.1}, for $n, k\geq0$, we get
\begin{align}\label{newmann_main_8}
	c\left(p^{6k}n+11\left( \frac{p^{6k}-1}{24}\right) \right)\equiv c(n)\pmod{2}.
\end{align}
Notice that, in this case, if we take all those $n\geq0$ for which $p\nmid(24n+11)$, then \eqref{newmann_main_1} takes the following form:
\begin{align*}
c\left(p^2n+11\left(\frac{p^2-1}{24}\right) \right)\equiv0\pmod{2}. 
\end{align*}
Replacing $n$ by $p^2n+11\left(\frac{p^2-1}{24}\right)$ in \eqref{newmann_main_8}, we have
\begin{align}\label{newmann_main_9}
	c\left(p^{6k+2}n+11\left( \frac{p^{6k+2}-1}{24}\right) \right)\equiv0\pmod{2}.
\end{align}
Finally, combining \eqref{relation_in_b_c} and \eqref{newmann_main_9}, we get \eqref{Yao_2}. Putting $n=0$ in \eqref{newmann_main_8}, we get
\begin{align*}
	c\left(11\left( \frac{p^{6k}-1}{24}\right) \right)\equiv1\pmod{2},
\end{align*}
which, when combined with \eqref{relation_in_b_c}, yields \eqref{Yao_2a}.
\end{proof}
\section{Concluding Remarks}
In this section, we propose the following conjecture for $b_{21}(n)$.
\begin{conj}\label{conj}
If $p\in\mathcal{Q}$, then $b_{21}\left(\frac{11p^2-5}{6}\right)\equiv 1\pmod{2}$.
\end{conj}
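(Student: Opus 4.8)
We outline an approach to Conjecture~\ref{conj}. Set $k:=\frac{11(p^2-1)}{24}$, which is an integer because $p^2\equiv1\pmod{24}$; then $4k+1=\frac{11p^2-5}{6}$ and $24k+11=11p^2$. Since every $p\in\mathcal{Q}$ satisfies $p\geq29$, the integer $11p^2$ is not a perfect square, so the argument in the proof of Theorem~\ref{thm1} (via \eqref{relation_b21_and_a}) gives
\[
b_{21}\left(\frac{11p^2-5}{6}\right)\equiv a(k)\equiv\frac{1}{4}\bigl(M_1(11p^2)-M_2(11p^2)\bigr)\pmod{2}.
\]
Hence Conjecture~\ref{conj} is equivalent to the single congruence $M_1(11p^2)-M_2(11p^2)\equiv4\pmod{8}$.

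The plan is to evaluate the two terms separately. As $11$ is squarefree and $p\neq11$, the squarefree part of $11p^2$ is $11$, so $M_i(11p^2)=N_i(11)+N_i(11p^2)$ for $i\in\{1,2\}$. Because $p\in\mathcal{Q}$ forces $p\equiv1,5,7,11\pmod{24}$, we have $\left(\frac{-6}{p}\right)_{J}=1$, and one checks $\left(\frac{-6}{11}\right)_{J}=1$; thus \eqref{N_1_formula} yields $N_1(11)=N(-96,11)=4$ and $N_1(11p^2)=N(-96,11p^2)=8$. On the other side, $8x^2+27y^2=11$ has no solution, so $N_2(11)=0$ and $M_2(11p^2)=N_2(11p^2)$. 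Therefore
\[
M_1(11p^2)-M_2(11p^2)=12-N_2(11p^2),
\]
and everything reduces to proving $N_2(11p^2)\equiv0\pmod{8}$. Computations (for instance $p=29$, where $8x^2+27y^2=9251$ is unsolvable) suggest the stronger statement that $8x^2+27y^2=11p^2$ has no primitive solution, i.e.\ $N_2(11p^2)=0$.

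To prove $N_2(11p^2)=0$ I would argue by descent, reusing the composition machinery of Lemma~\ref{U_p,m_A_m}. Suppose $(u,v)$ is a primitive solution of $8u^2+27v^2=11p^2$, and fix a primitive solution $(x_1,y_1)$ of $8x_1^2+27y_1^2=jp$ with $j\in\{1,4,8\}$, which exists since $p\in\mathcal{Q}$. First one shows $p\nmid u$ and $p\nmid v$ (from $p^2\mid11p^2$ and $\gcd(u,v)=1$), and that, after possibly conjugating $(x_1,y_1)$ to $(x_1,-y_1)$, one of $8x_1u\mp27y_1v\equiv0\pmod{p}$ holds, exactly as in Lemma~\ref{U_p,m_A_m}. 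The composition identity of that lemma then produces a primitive pair $(a,b)$ with $a^2+216b^2=11jp$. The key remaining step is a second descent that divides out the surviving factor $p$: composing $(a,b)$ back against $(x_1,y_1)$ in the opposite sense should yield a primitive representation of $11$, up to a power of $2$, by $8x^2+27y^2$ (equivalently of $11j'$ by $a^2+216b^2$). This contradicts $11\notin\mathcal{Q}=\mathcal{P}$, since $8x^2+27y^2=11j$ has no primitive solution for any $j\in\{1,4,8\}$. Structurally this is expected: $8x^2+27y^2$ is an ambiguous form of order~$2$ in the class group of discriminant $-864$, squaring to the principal form $x^2+216y^2$; as $p\in\mathcal{Q}$ is represented by an ambiguous class (up to the $2$-power $j$), the ideal above $p$ has order dividing~$2$, so $11p^2$ lies in the class of $11$, and $8x^2+27y^2$ represents $11p^2$ precisely when it represents $11$.

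The main obstacle is this second descent past $p$. Lemma~\ref{U_p,m_A_m} is proved only under $p\nmid m$, whereas here $m=11p$; the underlying algebraic identities survive, but one must redo the $p$-divisibility bookkeeping for the composed coordinates and re-establish primitivity, and the $2$-adic normalization carried by $j$ behaves differently in each residue class $p\equiv1,5,7,11\pmod{24}$, so the proof will split into cases governed by Lemmas~\ref{lemma_j=1}--\ref{lemma_j=8}. At bottom, one is controlling representations by a single class rather than a whole genus in the non-maximal order of discriminant $-864$ (conductor~$6$ over $\mathbb{Q}(\sqrt{-6})$); this genus-versus-class gap explains why no elementary congruence obstruction exists, as $11p^2$ is free of local obstructions. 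If the clean vanishing proves elusive, it would suffice for the conjecture to show only $N_2(11p^2)\equiv0\pmod{8}$, which one might obtain by producing a fixed-point-free action of a group of order~$8$ on the solution set of $8u^2+27v^2=11p^2$, combining the four sign changes with the two-to-one pairing of Lemma~\ref{U_p,m_A_m}.
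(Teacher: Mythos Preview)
This statement is a \emph{conjecture} in the paper, not a theorem; the paper does not prove it. What the paper does provide (in the Concluding Remarks) is exactly the reduction you carry out: using \eqref{relation_b21_and_a} with $k=\frac{11(p^{2}-1)}{24}$, computing $M_{1}(11p^{2})=12$, and observing that the conjecture would follow if one could show that $8x^{2}+27y^{2}=11p^{2}$ has no integral solutions. Your derivation of $M_{1}(11p^{2})-M_{2}(11p^{2})=12-N_{2}(11p^{2})$ is correct and matches the paper's reformulation precisely.

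Where you go beyond the paper is in sketching a descent/composition argument to force $N_{2}(11p^{2})=0$. The class-group heuristic is the right intuition: $8x^{2}+27y^{2}$ is ambiguous of order~$2$ in discriminant $-864$, so if $p$ lies in that class then $p^{2}$ lies in the principal class and $11p^{2}$ lies in the class of~$11$, which is not the class of $8x^{2}+27y^{2}$. However, you correctly identify the genuine gap yourself: Lemma~\ref{U_p,m_A_m} is proved only under $p\nmid m$, and your second descent needs $m=11p$. Rewriting the composition identities is routine, but controlling primitivity and the $2$-adic factor $j$ through two successive compositions in a non-maximal order (conductor~$6$) is exactly the delicate point, and nothing in your write-up actually carries it out. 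The fallback idea of exhibiting a fixed-point-free group action of order~$8$ on the solution set runs into the same problem: the four sign changes give a free action of order~$4$, but promoting it to order~$8$ via the two-to-one map of Lemma~\ref{U_p,m_A_m} again requires $p\nmid m$.

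In short: your reduction is correct and coincides with the paper's, your proposed line of attack is natural, but what you have written is a strategy rather than a proof, and the conjecture remains open in both the paper and your proposal.
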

A proof of Conjecture \ref{conj} will confirm that the congruences obtained in Theorem \ref{thm1} are different from those obtained in Theorem \ref{Newmann_b21}. We add the following discussion on Conjecture \ref{conj}.
\par Let $p\in \mathcal{Q}$. By taking $k=\frac{11p^2-11}{24}$ in \eqref{relation_b21_and_a}, we get
\begin{align*}
b_{21}\left(\frac{11p^2-5}{6}\right)\equiv a\left(\frac{11p^2-11}{24}\right)\pmod{2},
\end{align*}
where
\begin{align*}
a\left(\frac{11p^2-11}{24}\right)=\left|\left\lbrace(x,y)\in\mathbb{N}^2:\ 8x^2+3y^2=11p^2,~3\nmid y\right\rbrace \right|.
\end{align*}
Since $11p^2\equiv11\pmod{24}$, \eqref{M_1_formula} yields
\begin{align*}
	\left|\left\lbrace(x,y)\in\mathbb{Z}^2:\ 8x^2+3y^2=11p^2\right\rbrace \right|=M_1(11p^2)=12.
\end{align*}
Out of $12$ solutions of $8x^2+3y^2=11p^2$, four non-primitive solutions are $(\pm p,\pm p)$. We conjecture that for $p\in \mathcal{Q}$, if the remaining $8$ solutions are $(\pm a, \pm b)$, $(\pm u, \pm v)$, then $3\nmid b,v$. That is, $a\left(\frac{11p^2-11}{24}\right)=3$ and thus, $b_{21}\left(\frac{11p^2-5}{6}\right)$ is odd. For example, when $p=29$, $(\pm 16,\pm 49),~(\pm 29,\pm 29)$, and $(\pm 34,\pm 1)$ are all solutions of $8x^2+3y^2=11\cdot {29}^2$ and hence, $a\left(\frac{11\cdot{29}^2-11}{24}\right)=3$. Therefore, to prove Conjecture \ref{conj}, it is enough to prove that for any $p\in\mathcal{Q}$, there is no integral solutions to $8x^2+27y^2=11p^2$. 

\end{document}